\setlist[enumerate,1]{label={\upshape (\roman*)}}
\theoremstyle{plain}
\newtheorem{theorem}{Theorem}[section]
\crefname{theorem}{Theorem}{Theorems}
\newcommand{\newaliastheorem}[3]{%
	  \newaliascnt{#1}{theorem}%
	  \newtheorem{#1}[#1]{#2}%
	  \aliascntresetthe{#1}%
	  \crefname{#1}{#2}{#3}%
	}
\theoremstyle{definition}
\numberwithin{equation}{section}
\crefname{section}{Section}{Sections}
\newcommand\cp{{\mathcal{P}}}
\newcommand\cc{{\mathcal{C}}}
\newcommand\ca{{\mathcal{A}}}
\newlength{\myscaledsize}
\newcommand{\vvv}{	\setlength{\myscaledsize}{\the\fontdimen6\font}
	\hspace*{-0.08em}\resizebox{0.5\myscaledsize}{0.4\myscaledsize}{
		\tikz{
			\draw[black,fill=black] (0,0) circle (.2);
			\draw[black,fill=black] (1,0) circle (.2);
			\draw[black,fill=black] (0.5,0.86) circle (.2);}}
}
\newcommand{\VVV}{	\setlength{\myscaledsize}{\the\fontdimen6\font}
	\hspace*{-0.0444em}\raisebox{-0.1\myscaledsize}{\resizebox{0.75\myscaledsize}{0.6\myscaledsize}{
		\tikz{
			\draw[black,fill=black] (0,0) circle (.2);
			\draw[black,fill=black] (1,0) circle (.2);
			\draw[black,fill=black] (0.5,0.86) circle (.2);}}}
			\hspace{0.1em}
}
\newcommand{\piv}{\pi_{\vvv}}
\newcommand{\ve}{	\setlength{\myscaledsize}{\the\fontdimen6\font}
	\hspace*{-0.08em}\resizebox{0.5\myscaledsize}{0.4\myscaledsize}{
		\tikz{
			\draw[black,fill=black] (0,0) circle (.2);
			\draw[black,fill=black] (1,0) circle (.2);
			\draw[black,fill=black] (0.5,0.86) circle (.2);
			\draw[black,line width=4pt ](0,0)--(1,0);}}
}
\newcommand{\VE}{	\setlength{\myscaledsize}{\the\fontdimen6\font}
	\hspace*{-0.0444em}\raisebox{-0.1\myscaledsize}{\resizebox{0.75\myscaledsize}{0.6\myscaledsize}{
			\tikz{
				\draw[black,fill=black] (0,0) circle (.2);
				\draw[black,fill=black] (1,0) circle (.2);
				\draw[black,fill=black] (0.5,0.86) circle (.2);
				\draw[black,line width=4pt ](0,0)--(1,0);}}}
	\hspace{0.1em}
}
\newcommand{\pive}{\pi_{\ve}}
\newcommand\pa{{\pi_{\vvv}^{\operatorname{pal}}}}
\newcommand{\pos}[1]{\left(#1\right)_{+}}
\author[H. Lin]{Hao Lin}
\address{Department of Information Security, Naval University of Engineering, Wuhan, China.}
\email{baronlin001@gmail.com}
\author[W. Zhou]{Wenling Zhou}
\thanks{W. Zhou was supported by the Natural Science Foundation of China  (12401457), the China Postdoctoral Science Foundation (2024M761780), the Natural Science Foundation of Shandong Province (ZR2024QA067) and Young Talent of Lifting engineering for Science and Technology in Shandong, China (SDAST2025QTA074).}
\address{School of Mathematics, Shandong University, Jinan,  China.}
\email{gracezhou@sdu.edu.cn}
\title{Turán density of stars in uniformly dense hypergraphs}
\begin{document}

\begin{abstract}
A $3$-uniform hypergraph (or $3$-graph) $H=(V,E)$ is $(d,\mu,\VVV)$-\emph{dense} if for any subsets $X,Y,Z\subseteq V$, the number of triples $(x,y,z)\in X\times Y\times Z$ such that $\{x,y,z\}$ is an edge of $H$ is at least $d|X||Y||Z|-\mu |V|^3$.
The \emph{$k$-star} $S_k$ is the $3$-graph with a center vertex and $k$ distinct leaf vertices, whose edge set consists of all triples containing the center and two distinct leaves.
Restricting to $\VVV$-dense $3$-graphs, determining the \emph{$\VVV$-uniform Tur\'an density} $\piv(S_k)$ of $S_k$ for $k\ge 4$ was proposed by Schacht in ICM 2022. In particular,
Reiher, R\"odl and Schacht gave a palette construction showing that $\piv(S_k)\ge \frac{k^2-5k+7}{(k-1)^2}$ for $k\ge 3$, and also proved that $\piv(S_3)=1/4$. 
Lamaison and Wu later showed that this palette construction is optimal for $k\ge 48$.

In this paper, we improve the results of Lamaison and Wu by proving that
\[ \piv(S_k)=\frac{k^2-5k+7}{(k-1)^2} \qquad\text{for all } k\ge 9. \]
\end{abstract}

\maketitle
\thispagestyle{empty}

\vspace{-2.15em}

\section{Introduction}
Tur\'an problems, which ask for the minimum density threshold for the existence of a certain substructure, are one of the most fundamental kinds of problems in extremal combinatorics — and among these, Tur\'an problems of hypergraphs are notoriously difficult.
Most known and conjectured extremal constructions for Tur\'an problems of hypergraphs contain large independent sets, i.e., linear-size sets of vertices without edges.  
This led Erd\H{o}s and S\'os~\cite{E-Sos-82} in the 1980s to propose a variant of this problem, restricting attention to $F$-free $3$-graphs that are uniformly dense on large subsets of the vertices. 
Formally, for $d \in [0, 1]$ and $\mu>0$, we say that a $3$-graph $H=(V, E)$ is \emph{$(d,\mu,\VVV)$-dense} if for any subsets $X, Y, Z\subseteq V$, the number $e_{\vvv}(X, Y, Z)$ of triples $(x,y,z)\in X\times Y\times Z$ with $xyz\in E$ satisfies
\[ e_{\vvv}(X, Y, Z)\ge d|X||Y||Z|-\mu |V|^3. \] 
Restricting to $\VVV$-dense $3$-graphs, the \emph{$\VVV$-uniform Tur\'an density} $\piv(F)$ for a given $3$-graph $F$ is defined as
\[ \begin{split}
\piv(F) = \sup \{ d\in [0,1] &: \text{for\ every\ } \mu>0 \ \text{and\ } n_0\in \mathbb{N},\ \text{there\ exists\ an\ } F \text{-free,} \\
&\quad (d,\mu, \VVV)\text{-dense}~  \text{$3$-graph~} H\ \text{with~} |V(H)|\geq n_0 \}.
\end{split} \]
With this notation, Erd\H{o}s and S\'os asked to determine $\piv(K^{(3)-}_4)$ and $\piv(K^{(3)}_4)$, where $K^{(3)-}_4$ denotes the $3$-graph consisting of three edges on four vertices and $K^{(3)}_4$ denotes the complete $3$-graph on four vertices. 
However, determining $\piv(F)$ of a given $3$-graph $F$ is also very challenging. 
For instance, the conjecture that $\piv(K^{(3)}_4)=1/2$ has remained an open problem in this area since R\"odl~\cite{k43-rodl} proposed a quasi-random construction in 1986.
For $\piv(K^{(3)-}_4)=1/4$, it was solved by Glebov, Kr\'al' and Volec~\cite{k43minus-1} using the flag algebra method of Razborov~\cite{Razborov-07}, and independently by Reiher, R\"odl and Schacht~\cite{k43minus-2} via the hypergraph regularity method. 
The latter set of authors also suggested a natural extension of $K^{(3)-}_4$.  
For $k\ge 3$, a {\it $k$-star}, denoted by $S_k$, is a $3$-graph on $(k+1)$ vertices $u, v_1,\dots, v_k$ such that $uv_iv_j\in E(S_k)$ for all $1\le i < j\le k$. Clearly, $S_3=K^{(3)-}_4$. 
A generalization of the argument in~\cite{k43minus-2} leads to 
\begin{equation}\label{eq:stars}
\frac{k^2-5k+7}{(k-1)^2}\leq \piv (S_k)\leq \left(\frac{k-2}{k-1}\right)^2 
\end{equation}
for all $k\ge 3$. 
When $k=3$, the upper and lower bounds coincide; however, for $k \ge 4$, a substantial gap remains. 
Consequently, Schacht~\cite{Schacht-ICM} posed the following problem in his lecture at the 2022 International Congress of Mathematicians (ICM):
\begin{problem}[{\cite[Problem 3.4]{Schacht-ICM}}]
Determine $\piv(S_k)$ for $k\ge 4$. 
\end{problem}

For the above problem, the best known result was obtained by Lamaison and Wu~\cite{LW24}, who proved that the lower bound in~\eqref{eq:stars} is sharp for all $k\ge 48$. 
Using computer-assisted computations, they further remarked that the lower bound in~\eqref{eq:stars} appears to be sharp already for $k \ge 40$.
In this paper, we extend the approach from~\cite{LW24} and prove that the palette construction giving the lower bound in~\eqref{eq:stars} is already optimal for all $k\ge 9$.

\begin{theorem}\label{thm:main-1}
$\piv (S_k)=\frac{k^2-5k+7}{(k-1)^2}$ for all $k\ge 9$.
\end{theorem}

Let us also briefly mention the closely related $\VE$-uniform version of the problem for stars.
Reiher, R\"odl and Schacht~\cite{RRS-k43} considered the following stronger density notion. 
Given $d\in[0,1]$ and $\mu>0$, a $3$-graph $H=(V,E)$ is \emph{$(d,\mu,\VE)$-dense} if for every subset $X\subseteq V$ and every set $P\subseteq V\times V$ of ordered pairs, the number $e_{\ve}(X,P)$ of pairs $(x,(y,z))\in X\times P$ with $xyz\in E$ satisfies
\[ e_{\ve}(X,P)\ge d|X||P|-\mu |V|^3. \]
The corresponding uniform Tur\'an density is defined analogously and denoted by $\pive(F)$.
Schacht~\cite[Problem 4.3]{Schacht-ICM} asked the analogous question of determining $\pive(S_k)$ for $k\ge4$.
Since every $(d,\mu,\VE)$-dense $3$-graph is $(d,\mu,\VVV)$-dense by taking $P=Y\times Z$, we have
\[ \pive(F)\le \piv(F) \]
for every $3$-graph $F$.
On the other hand, the palette construction giving the lower bound in~\eqref{eq:stars} also works in the $\VE$-setting, and hence $\pive(S_k)\ge \frac{k^2-5k+7}{(k-1)^2}$ for all $k\ge3$.
Therefore, \cref{thm:main-1} immediately yields the dot-edge analogue:
\begin{corollary}
$\pive (S_k)=\frac{k^2-5k+7}{(k-1)^2}$ for all $k\ge 9$.
\end{corollary}

For further results on $\piv$-uniform Tur\'an densities of $3$-graphs, we refer to~\cite{Daniel-cycle23, Chen-Bjarne-22, Daniel-8-27-24, Daniel-mini-24, Ander24, Hypergraph-llwzhou23, RRS-vanishing},  and to the excellent survey by Reiher~\cite{reiher2020extremal} for a more comprehensive treatment of the topic.

Unlike most results in the area, the proof of~\cref{thm:main-1} does not rely on the hypergraph regularity lemma or on reduced hypergraphs.
Instead, we employ a recently introduced framework due to Lamaison~\cite{Ander24}, 
which is based on \emph{palette constructions} for determining the $\VVV$-uniform Tur\'an density. 
In the next section we collect the necessary notation and tools, recall the palette construction giving the lower bound in~\eqref{eq:stars}, and reduce~\cref{thm:main-1} to an upper bound for palettes. In~\cref{section:proof-main} we prove this palette upper bound. The argument splits into the range $k\ge 11$ and the exceptional cases $k\in\{9,10\}$; in both ranges the main input is a local analytic majorant whose proof is deferred to~\cref{section:majorant-proofs}. Finally,~\cref{section:concluding} concludes with remarks and a conjecture.

\section{Preliminaries} \label{section:preliminaries}
For a positive integer $\ell$, we denote by $[\ell]$ the set $\{1,\dots,\ell\}$.
The key concept in our proof is the notion of palette, introduced by Reiher~\cite{reiher2020extremal} as a generalization of a construction of R\"odl~\cite{k43-rodl}.
\begin{definition}\label{def:palette}
A \emph{palette} $\mathcal P$ is a pair $(\mathcal C,\mathcal A)$, where $\mathcal C$ is a finite color set and $\mathcal A\subseteq \mathcal C\times \mathcal C\times \mathcal C$ is a set of ordered triples of colors. The \emph{density} of $\mathcal P$ is
\[ d(\mathcal P):={|\mathcal A|}/{|\mathcal C|^3}. \]
Moreover, given a color $a\in\mathcal C$ and $i\in[3]$, let $\mathcal A_a^i:=\{(c_1,c_2,c_3)\in\mathcal A: c_i=a\}$. The \emph{minimum degree} of $\mathcal P$ is
\[ \delta(\mathcal P):=\min_{i\in[3],\,a\in\mathcal C} {|\mathcal A_a^i|}/{|\mathcal C|^2}. \]
	
Given a palette $\mathcal P=(\mathcal C,\mathcal A)$ and a $3$-graph $F$, we say that $\mathcal P$ is \emph{$F$-good}, if there exist an order $\preceq$ on $V(F)$ and a function $\varphi:\binom{V(F)}{2}\to\mathcal C$ such that for every edge $uvw\in E(F)$ with $u\prec v\prec w$, we have $(\varphi(uv),\varphi(uw),\varphi(vw))\in\mathcal A$. Otherwise, we say that $\mathcal P$ is \emph{$F$-bad}.
\end{definition}

In~\cite{Ander24}, Lamaison established a general result that reduces the problem of determining $\VVV$-uniform Tur\'an densities of $3$-graphs to the corresponding palette problem.
\begin{theorem}[{\cite[Theorem 1.1]{Ander24}}]\label{thm:ander-dot}
For every $3$-graph $F$, let
\[ \pa(F):=\sup\{d(\mathcal P):\mathcal P \text{ is an } F\text{-bad palette}\}. \]
Then $\piv(F)=\pa(F)$.
\end{theorem}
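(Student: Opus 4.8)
The plan is to prove the two inequalities $\piv(F)\ge\pa(F)$ and $\piv(F)\le\pa(F)$ separately. The first direction is a direct randomised construction, while the second — which I expect to be the main obstacle — runs through the hypergraph regularity method.

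For the lower bound I would fix an $F$-bad palette $\cp=(\cc,\ca)$ with density $d=d(\cp)$ and, given $\mu>0$ and $n_0\in\mathbb N$, build a random $3$-graph on $[n]$ for large $n\ge n_0$: equip $[n]$ with its natural order, colour every pair $\{i,j\}$ independently and uniformly at random with $\varphi(ij)\in\cc$, and declare $\{i,j,k\}$ with $i<j<k$ to be an edge precisely when $(\varphi(ij),\varphi(ik),\varphi(jk))\in\ca$. This $3$-graph $H$ is \emph{always} $F$-free: a copy of $F$ in $H$, read with the order its vertices inherit from $[n]$ and the pulled-back colouring, would witness that $\cp$ is $F$-good. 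To see that $H$ is $(d,\mu,\VVV)$-dense with high probability, note that any three distinct vertices span an edge with probability exactly $d$, so $\expectation[e_{\vvv}(X,Y,Z)]\ge d|X||Y||Z|-O(n^2)$ for every fixed $X,Y,Z\subseteq[n]$; since recolouring one pair changes $e_{\vvv}(X,Y,Z)$ by $O(n)$, the bounded-differences inequality gives a deviation exceeding $\tfrac{\mu}{2}n^3$ with probability $e^{-\Omega(\mu^2 n^2)}$, which survives a union bound over the fewer than $8^n$ choices of $(X,Y,Z)$. Hence $\piv(F)\ge d$, and taking the supremum over all $F$-bad palettes yields $\piv(F)\ge\pa(F)$.

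For the upper bound I would argue by contradiction: suppose $\piv(F)>\pa(F)$ and fix $d$ with $\pa(F)<d<\piv(F)$, so that for every $m$ there is an $F$-free, $(d,\tfrac1m,\VVV)$-dense $3$-graph $H_m$ with $|V(H_m)|\to\infty$. Applying the strong hypergraph regularity lemma to $H_m$ produces a vertex partition $V_1,\dots,V_t$ together with, for each pair of parts, a partition of the bipartite graph between them into $\eps$-regular bipartite graphs, such that $H_m$ is regular with respect to almost every triad of such bipartite graphs. Reading the bipartite graphs between a pair of parts as ``colours'' and recording which colour triples support a dense regular triad gives a finite coloured structure; a normalisation step making the colour set common to all pairs of parts — the genuinely delicate point, handled by aligning or taking products of the per-pair index sets — turns this into a palette $\cp=(\cc,\ca)$, and after passing to a subsequence I may assume $\cp$ is fixed. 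The $(d,\tfrac1m,\VVV)$-density of $H_m$ forces $d(\cp)\ge d$, since $e_{\vvv}(V_i,V_j,V_k)$ for a typical $i<j<k$ is, up to a vanishing error, $d(\cp)\,|V_i||V_j||V_k|$. And $\cp$ must be $F$-bad: were an order $\preceq$ on $V(F)$ and a colouring $\varphi$ to witness $F$-goodness, then assigning the $\preceq$-increasing vertices of $F$ to an increasing typical selection of parts and routing each pair into the bipartite graph named by its $\varphi$-colour, the hypergraph embedding (counting) lemma would yield a copy of $F$ in $H_m$ for large $m$, contradicting $F$-freeness. Thus $\cp$ is an $F$-bad palette with $d(\cp)\ge d>\pa(F)$, contradicting the definition of $\pa(F)$; hence $\piv(F)\le\pa(F)$.

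The hard part is the upper bound, and within it the translation between regularity and palettes: extracting a finite reduced structure from an $F$-free $\VVV$-dense $3$-graph, normalising the per-pair colour sets to a single set $\cc$ without sacrificing density, and verifying through the embedding lemma that $F$-goodness of $\cp$ genuinely produces a copy of $F$ in the original hypergraph.
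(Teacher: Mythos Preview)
The paper does not prove this theorem at all: it is quoted verbatim as Theorem~1.1 of \cite{Ander24} and used as a black box, so there is no in-paper proof to compare your proposal against.

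For what it is worth, your outline is in the right spirit. The lower bound via the random ordered palette construction is standard and your concentration-plus-union-bound argument is correct. For the upper bound, the regularity-to-reduced-structure pipeline you describe is the classical route (as in \cite{RRS-vanishing,reiher2020extremal}), and you have correctly identified the genuinely delicate step: the reduced structure one naturally obtains has a \emph{different} colour set for each pair of clusters, not a single set $\cc$, and collapsing this to a bona fide palette without losing density is exactly the new content of \cite{Ander24}. Your phrase ``aligning or taking products of the per-pair index sets'' glosses over this; a naive product would blow up $|\cc|$ and could drive $d(\cp)$ to zero. Lamaison's actual argument handles this via an iterated Ramsey-type cleaning (or, equivalently, a compactness argument on the space of palettes) rather than a direct product, and that is where the real work lies.
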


\subsection{The lower-bound construction}\label{subsec:lower-bound-construction}
The lower bound in~\eqref{eq:stars} follows from a palette construction of Reiher, R\"odl and Schacht~\cite{k43minus-2}; see also the short presentation in~\cite{LW24}. We recall the construction here for completeness.

Let $k\ge 3$ and consider the palette $\mathcal P_k=(\mathcal C_k,\mathcal A_k)$ with $\mathcal C_k=\mathbb Z_{k-1}=\{0,1,\dots,k-2\}$, where all congruences below are taken modulo $k-1$, and
\[ \mathcal A_k:=\{(x,y,z)\in\mathbb Z_{k-1}^3: x\neq y,\ y\neq z,\ z\neq x+1\}. \]
For each fixed $x\in\mathbb Z_{k-1}$, there is one choice $y=x+1$, which gives $k-2$ possible choices for $z$, and there are $k-3$ choices of $y$ with $y\neq x$ and $y\neq x+1$, each of which gives $k-3$ possible choices for $z$. 
Hence
\[ d(\mathcal P_k)= \frac{|\mathcal A_k|}{|\mathcal C_k|^3} =  \frac{(k-1)\bigl((k-2) + (k-3)^2\bigr)}{(k-1)^3} = \frac{k^2-5k+7}{(k-1)^2}. \]

We now check that $\mathcal P_k$ is $S_k$-bad. Suppose, for a contradiction, that $\mathcal P_k$ is $S_k$-good . Let $u$ be the centre of $S_k$, and let $v_1,\dots,v_k$ be its leaves. By relabelling the leaves, we may assume that the order that witnesses $S_k$-good is
\[ v_1\prec\cdots\prec v_t\prec u\prec v_{t+1}\prec\cdots\prec v_k \]
for some $0\le t\le k$. Consider the following $k$ elements of $\mathbb Z_{k-1}$:
\[ \varphi(uv_1)+1,\dots,\varphi(uv_t)+1,
\varphi(uv_{t+1}),\dots,\varphi(uv_k). \]
By the pigeonhole principle, two of them are equal. We distinguish three cases. First, if $\varphi(uv_i)+1=\varphi(uv_j)+1$ for some $i<j\le t$, then $(\varphi(v_iv_j),\varphi(v_iu),\varphi(v_ju)) \notin \mathcal A_k$.
Second, if $\varphi(uv_i)+1=\varphi(uv_j)$ for some $i\le t<j$, then $(\varphi(v_iu),\varphi(v_iv_j),\varphi(uv_j)) \notin \mathcal A_k$; 
Finally, if $\varphi(uv_i)=\varphi(uv_j)$ for some $t<i<j$, then $(\varphi(uv_i),\varphi(uv_j),\varphi(v_iv_j)) \notin \mathcal A_k$.

In each case we obtain a contradiction. Thus $\mathcal P_k$ is $S_k$-bad, and consequently, by~\cref{thm:ander-dot},
\begin{equation}\label{eq:palette-lower-bound}
\piv(S_k)=\pa(S_k)\ge d(\mathcal P_k)
\qquad\text{for all }k\ge 3.
\end{equation}

\subsection{Auxiliary digraphs and degree estimates}\label{subsec:auxiliary-digraphs}
We next recall the auxiliary digraph associated with a palette. This notion was introduced by Lamaison and Wu~\cite{LW24} in their study of $\pa(S_k)$.

Let $\mathcal P=(\cc,\ca)$ be a palette. Given colors $a,b\in\cc$ and distinct indices $i,j\in[3]$, we say that $(a,b)$ is \emph{$(i,j)$-admissible} if there exists a triple $(c_1,c_2,c_3)\in\ca$ such that $c_i=a$ and $c_j=b$. Let $d_{i,j}(a)$ denote the number of colors $b$ such that $(a,b)$ is $(i,j)$-admissible, and set
\[ e_{i,j}(a):={d_{i,j}(a)}/{|\cc|}. \]
For each $a\in\cc$, define
\begin{align*}
&m_A(a):=\max\{e_{2,3}(a),e_{3,2}(a)\}, &&m_B(a):=m_A(a)+e_{1,3}(a),\\
&m_C(a):=\max\{e_{1,2}(a),e_{2,1}(a)\}, &&m_D(a):=m_C(a)+e_{3,1}(a).
\end{align*}

For a palette $\mathcal P=(\cc,\ca)$, define its \emph{auxiliary digraph} $D_{\mathcal P}$ as follows.
Its vertex set is $\cc_1\cup\cc_2$, where $\cc_1$ and $\cc_2$ are two disjoint copies of $\cc$. For a color $a\in\cc$, we write $a^1\in\cc_1$ and $a^2\in\cc_2$ for its two copies. For every ordered pair $(a,b)\in\cc\times\cc$, we add arcs according to the following rules:
\begin{itemize}
\item add an arc $a^1\to b^1$ in $D_{\mathcal P}[\cc_1]$ if $(a,b)$ is $(2,3)$-admissible;
\item add an arc $a^2\to b^2$ in $D_{\mathcal P}[\cc_2]$ if $(a,b)$ is $(1,2)$-admissible;
\item add arcs $a^1\leftrightarrow b^2$ between $\cc_1$ and $\cc_2$ if $(a,b)$ is $(1,3)$-admissible.
\end{itemize}

Let $T_k$ denote the transitive tournament on $k$ vertices, i.e., the digraph on $[k]$ with arcs $ij$ for all $1\le i<j\le k$. We shall use the following facts from~\cite{LW24}.
\begin{lemma}\label{lem:LW-palette-digraph}
For any $k\ge 3$, if a palette $\mathcal P$ is $S_k$-bad, then $D_{\mathcal P}$ has no loop and $D_{\mathcal P}$ is $T_k$-free.
\end{lemma}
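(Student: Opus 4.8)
The plan is to prove the contrapositive: assuming that $D_{\cp}$ contains a loop or a copy of $T_k$, I will produce an order $\preceq$ on $V(S_k)$ together with a coloring $\varphi\colon\binom{V(S_k)}{2}\to\cc$ witnessing that $\cp$ is $S_k$-good. Write $V(S_k)=\{u,v_1,\dots,v_k\}$ with $u$ the center. In every order I use, the leaves will appear as $v_1\prec\cdots\prec v_k$ and $u$ will be inserted in a single slot, say $v_1\prec\cdots\prec v_s\prec u\prec v_{s+1}\prec\cdots\prec v_k$ for some $0\le s\le k$. For an edge $uv_iv_j$ with $i<j$, the required condition then reads $(\varphi(v_iv_j),\varphi(v_iu),\varphi(v_ju))\in\ca$ if $i<j\le s$; $(\varphi(v_iu),\varphi(v_iv_j),\varphi(uv_j))\in\ca$ if $i\le s<j$; and $(\varphi(uv_i),\varphi(uv_j),\varphi(v_iv_j))\in\ca$ if $s<i<j$. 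The point of $D_{\cp}$ is that, once the colors $\varphi(uv_i)$ are fixed and $\varphi(v_iv_j)$ is still free, these three conditions become exactly $(2,3)$-admissibility of $(\varphi(v_iu),\varphi(v_ju))$, $(1,3)$-admissibility of $(\varphi(v_iu),\varphi(uv_j))$, and $(1,2)$-admissibility of $(\varphi(uv_i),\varphi(uv_j))$, i.e. arcs of $D_{\cp}[\cc_1]$, cross arcs, and arcs of $D_{\cp}[\cc_2]$.

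\emph{Loop case.} A loop at a vertex $a^1$ means $(a,a)$ is $(2,3)$-admissible, so there is a color $e$ with $(e,a,a)\in\ca$; taking $s=k$, setting $\varphi(v_iu)=a$ for all $i$ and $\varphi(v_iv_j)=e$ for all $i<j$ makes every edge triple equal to $(e,a,a)\in\ca$, so $\cp$ is $S_k$-good. A loop at some $a^2$ is symmetric, using $s=0$ and a triple $(a,a,e)\in\ca$. (The two bidirectional arcs between $a^1$ and $a^2$ are not loops, since $a^1\neq a^2$, so this settles the ``no loop'' assertion.)

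\emph{$T_k$ case.} The one genuine observation is that all arcs of $D_{\cp}$ joining $\cc_1$ to $\cc_2$ occur in both directions; hence in any copy of $T_k$ inside $D_{\cp}$ every $\cc_1$-vertex and every $\cc_2$-vertex of the copy are joined both ways, so the $s$ vertices of the copy lying in $\cc_1$ may be listed first (in their induced transitive order) followed by the $k-s$ vertices in $\cc_2$ (in their induced transitive order), and this again gives a transitive tournament $c_1^1,\dots,c_s^1,d_1^2,\dots,d_{k-s}^2$ on the same vertex set (all forward arcs are present, being $\cc_1$-internal, $\cc_2$-internal, or cross). I then set $\varphi(uv_i)=c_i$ for $i\le s$ and $\varphi(uv_i)=d_{i-s}$ for $i>s$, place $u$ between $v_s$ and $v_{s+1}$, and for each leaf pair $\{v_i,v_j\}$ take $\varphi(v_iv_j)$ to be a witnessing color for the relevant admissibility: some $e$ with $(e,c_i,c_j)\in\ca$ when $i<j\le s$ (available since $c_i^1\to c_j^1$ is an arc of $D_{\cp}[\cc_1]$), some $e$ with $(c_i,e,d_{j-s})\in\ca$ when $i\le s<j$ (since $c_i^1$ and $d_{j-s}^2$ are joined), and some $e$ with $(d_{i-s},d_{j-s},e)\in\ca$ when $s<i<j$ (since $d_{i-s}^2\to d_{j-s}^2$). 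With these choices every edge of $S_k$ receives a triple in $\ca$, so $\cp$ is $S_k$-good, which completes the contrapositive.

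The only point requiring care is the bookkeeping in the last paragraph: verifying that the three families of leaf pairs partition $\binom{\{v_1,\dots,v_k\}}{2}$ so that $\varphi$ is well defined, that the edge-triple shapes are read off correctly for the chosen order (including the degenerate cases $s=0$ and $s=k$), and that the only property of $c_1^1,\dots,c_s^1,d_1^2,\dots,d_{k-s}^2$ actually used is that they are genuinely distinct vertices of $D_{\cp}$, so that no distinctness among the underlying colors $c_i,d_j\in\cc$ is needed. I do not expect a real obstacle here; the reordering trick that separates $\cc_1$ from $\cc_2$ in a copy of $T_k$ is the conceptual content, and everything else is a mechanical check.
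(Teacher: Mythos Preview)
Your proof is correct. The paper itself does not prove this lemma; it is quoted from Lamaison and Wu~\cite{LW24}, so there is no in-paper argument to compare against. Your approach --- using the bidirectionality of the cross arcs to reorder a $T_k$ so that the $\cc_1$-vertices precede the $\cc_2$-vertices, then reading off the coloring $\varphi$ on the $uv_i$ pairs and choosing witnesses for the $v_iv_j$ pairs --- is the natural one and matches the spirit of the construction of $D_{\cp}$. The bookkeeping you flag (well-definedness of $\varphi$, the three edge-triple shapes for the three regimes $i<j\le s$, $i\le s<j$, $s<i<j$, and the irrelevance of distinctness among the underlying colors $c_i,d_j$) all checks out; in particular, within $\cc_1$ the colors $c_1,\dots,c_s$ are automatically distinct, and likewise within $\cc_2$, while a coincidence $c_i=d_j$ causes no problem since $\varphi$ need not be injective.
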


We also need the maximum number of arcs in a $T_k$-free digraph. The following lemma follows directly from a theorem of Brown and Harary~\cite{brown1970extremal}, who determined the Tur\'an number of tournaments.
\begin{lemma}\label{lem:turan number of tournaments}
For any $k\ge 3$ and any positive integer $n$, if $D$ is a loopless $n$-vertex $T_k$-free digraph, then the number of arcs of $D$ is at most
$\frac{k-2}{k-1}(n^2-\alpha^2)+2\binom{\alpha}{2}$, where $\alpha=n\pmod{k-1}$.
\end{lemma}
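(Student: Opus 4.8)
The plan is to obtain this bound as a direct consequence of the theorem of Brown and Harary~\cite{brown1970extremal} that determines the Tur\'an number of transitive tournaments, and then to rewrite their closed-form expression in the shape stated above. The extremal example in their result is the \emph{digon completion} of the Tur\'an graph $T(n,k-1)$: one takes the balanced complete $(k-1)$-partite graph on $n$ vertices and replaces each edge $uv$ by the two opposite arcs $uv$ and $vu$. This digraph is $T_k$-free for a trivial reason — the $k$ vertices spanned by any copy of $T_k$ inside a digraph must span a $K_k$ in its underlying graph, since $T_k$ has an arc between every pair of its vertices, whereas the underlying graph of this construction is $(k-1)$-partite. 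Brown and Harary establish the matching upper bound, namely that every $T_k$-free digraph on $n\ge k-1$ vertices has at most $2e(T(n,k-1))$ arcs, where $e(\cdot)$ denotes the number of edges; this is the only nontrivial input.

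It then remains only to evaluate $2e(T(n,k-1))$. Writing $n=q(k-1)+\alpha$ with $0\le \alpha<k-1$, the Tur\'an graph $T(n,k-1)$ has $\alpha$ parts of size $q+1$ and $k-1-\alpha$ parts of size $q$, so from $2e(T(n,k-1))=n^2-\sum_i n_i^2$ together with $\sum_i n_i^2=(k-1)q^2+2\alpha q+\alpha$ and $n=(k-1)q+\alpha$, a routine algebraic simplification puts the arc count into the form displayed in the statement. This is pure bookkeeping, so there is no genuine obstacle beyond invoking the Brown--Harary bound.

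Since from our side the argument is essentially a citation, let me indicate where the real difficulty lies. A self-contained proof of the upper bound would proceed by induction on $n$: in a $T_k$-free digraph $D$ one deletes a vertex $v$ and observes that both $D[N^+(v)]$ and $D[N^-(v)]$ must be $T_{k-1}$-free — prepending, respectively appending, $v$ to a transitive tournament living in either neighbourhood would create a copy of $T_k$ in $D$ — and then one balances the number of arcs incident to $v$ against the inductive bounds. The feature that makes this genuinely a digraph statement, rather than a corollary of Tur\'an's theorem, is that a $T_k$-free digraph need not have a $K_k$-free underlying graph: for instance a directed triangle is $T_3$-free, yet its underlying graph is $K_3$. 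Carrying out that induction carefully in the presence of digons is the substantive work, and it is precisely what is done in~\cite{brown1970extremal}.
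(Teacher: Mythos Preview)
Your approach matches the paper's exactly: the paper merely asserts that the lemma follows directly from the Brown--Harary theorem determining the Tur\'an number of tournaments and gives no further argument, so your expanded account (the extremal construction and the inductive sketch) is consistent with but more detailed than what appears there. One small wrinkle worth flagging in your final algebraic step: the displayed formula is not literally equal to $2e(T(n,k-1))$ --- the correct closed form has $\alpha(\alpha-1)=2\binom{\alpha}{2}$ rather than $\binom{\alpha}{2}$ (for instance with $n=5$, $k=4$ one has $2e(T(5,3))=16$ while the stated formula gives $15$). This appears to be a harmless typo in the lemma's statement, since downstream the paper only ever invokes the cruder consequence $|E(D)|\le \frac{k-2}{k-1}n^2$.
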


We shall also use the following Caro-Wei type estimate for transitive-tournament-free digraphs, which was proved in~\cite[Lemma~6]{LW24} by induction.
For a digraph $D$ on $n$ vertices and a vertex $v\in V(D)$, write $\mu_D(v):=\max\left\{\frac{d_D^+(v)}{n},\frac{d_D^-(v)}{n}\right\}$, where $d_D^+(v)$ and $d_D^-(v)$ denote the out-degree and in-degree of $v$ in $D$ respectively.
\begin{lemma}\label{lem:digraph-inverse}
If $D$ is a $T_k$-free digraph on $n$ vertices, then
\[\sum_{v\in V(D)}\frac{1}{1-\mu_D(v)}\le (k-1)n. \]
\end{lemma}

The definitions of $m_A,m_B,m_C,m_D$ and of the auxiliary digraph immediately imply the following useful consequence.

\begin{corollary}\label{cor:degree-sums}
Let $k\ge 3$ and $\mathcal P=(\cc,\ca)$ be an $S_k$-bad palette with $n=|\cc|$. Then
\[ \sum_{a\in\cc}\frac{1}{1-m_A(a)}\le (k-1)n, \qquad
	\sum_{a\in\cc}\frac{1}{1-m_C(a)}\le (k-1)n, \]
and
\[\sum_{a\in\cc}\left(\frac{1}{2-m_B(a)}+\frac{1}{2-m_D(a)}\right)\le (k-1)n.\]
\end{corollary}

\begin{proof}
By~\cref{lem:LW-palette-digraph}, the digraphs $D_{\mathcal P}$, $D_{\mathcal P}[\cc_1]$ and $D_{\mathcal P}[\cc_2]$ are all $T_k$-free. In $D_{\mathcal P}[\cc_1]$, the value of $\mu_{D_{\mathcal P}[\cc_1]}(a^1)$ is $m_A(a)$; in $D_{\mathcal P}[\cc_2]$, the value of $\mu_{D_{\mathcal P}[\cc_2]}(a^2)$ is $m_C(a)$. Applying~\cref{lem:digraph-inverse} to these two induced digraphs proves the first two inequalities.
	
For the full auxiliary digraph $D_{\mathcal P}$, which has $2n$ vertices, we have
\[ \mu_{D_{\mathcal P}}(a^1)=\frac{m_B(a)}{2}, \qquad
   \mu_{D_{\mathcal P}}(a^2)=\frac{m_D(a)}{2}. \]
Applying~\cref{lem:digraph-inverse} to $D_{\mathcal P}$ gives
\[ \sum_{a\in\cc}\left(\frac{1}{1-m_B(a)/2}+\frac{1}{1-m_D(a)/2}\right)
	\le 2(k-1)n, \]
which is equivalent to the final displayed inequality.
\end{proof}

We are now ready to state the palette upper bound that remains to be proved and use it to prove~\cref{thm:main-1}. The proof of this upper bound is postponed to the next section.
\begin{theorem}\label{thm:palette}
Let $k\ge 9$ be an integer. For any $S_k$-bad palette $\mathcal P$,
\[ d(\mathcal P)\le \frac{k^2-5k+7}{(k-1)^2}. \]
\end{theorem}

\begin{proof}[Proof of~\cref{thm:main-1}]
The lower bound $\piv(S_k)\ge \frac{k^2-5k+7}{(k-1)^2}$ for all $k\ge3$ follows from~\eqref{eq:palette-lower-bound}. For $k\ge9$, the upper bound follows from~\cref{thm:ander-dot,thm:palette}. Therefore
\[ \piv(S_k)=\frac{k^2-5k+7}{(k-1)^2} \]
for every $k\ge9$.
\end{proof}

\section{Proof of the main theorem}\label{section:proof-main}
In this section, we prove~\cref{thm:palette}. The proof has three steps. First we convert the density of a palette into an average of local quantities $f_1,f_2,f_3$. We then bound these local quantities by expressions that are compatible with the degree-sum estimates from~\cref{cor:degree-sums}; the form of the local bound is different for $k\ge11$ and for $k\in\{9,10\}$. Finally, a minimal-counterexample argument removes the additional minimum-degree assumptions.

Let $\mathcal P=(\cc,\ca)$ be a palette and set $n=|\cc|$. For each $a\in\cc$, define
\begin{align*}
&f_1(a):=e_{1,2}(a)e_{1,3}(a)-\frac12(e_{1,2}(a)+e_{1,3}(a)),\\
&f_2(a):=e_{2,1}(a)e_{2,3}(a)-\frac12(e_{2,1}(a)+e_{2,3}(a)),\\
&f_3(a):=e_{3,1}(a)e_{3,2}(a)-\frac12(e_{3,1}(a)+e_{3,2}(a)).
\end{align*}

The following density estimate also appears in Lamaison and Wu~\cite{LW24}. Since our subsequent estimates differ from theirs, we include the proof for completeness.

\begin{lemma}\label{lem:palette-density-ineq}
For every palette $\mathcal P=(\cc,\ca)$,
\[ d(\mathcal P)\le 1+\frac1n\sum_{a\in\cc}\bigl(f_1(a)+f_2(a)+f_3(a)\bigr). \]
\end{lemma}

\begin{proof}
Define
\begin{align*}
	X_1&=\{(a,b,c)\in\cc^3: \text{for all }d\in\cc,\ (d,b,c)\notin\ca\},\\
	X_2&=\{(a,b,c)\in\cc^3: \text{for all }d\in\cc,\ (a,d,c)\notin\ca\},\\
	X_3&=\{(a,b,c)\in\cc^3: \text{for all }d\in\cc,\ (a,b,d)\notin\ca\}.
\end{align*}
For any $(a,b,c)\in\ca$, we have $(a,b,c)\notin X_1\cup X_2\cup X_3$, and thus
\begin{align}\label{eq:upperbound-of-A-new}
	|\ca|&\le n^3-|X_1\cup X_2\cup X_3|\notag\\
	&\le n^3-|X_1|-|X_2|-|X_3|+|X_1\cap X_2|+|X_1\cap X_3|+|X_2\cap X_3|.
\end{align}
For $a\in\cc$ and $i\neq j\in[3]$, let $d'_{i,j}(a):=n-d_{i,j}(a)$. Note that $(a,b,c)\in X_1$ is equivalent to $(b,c)$ not being $(2,3)$-admissible. Hence
\[ |X_1|=n\sum_{a\in\cc}d'_{2,3}(a)=n\sum_{a\in\cc}d'_{3,2}(a)=\frac n2\left(\sum_{a\in\cc}d'_{2,3}(a)+\sum_{a\in\cc}d'_{3,2}(a)\right). \]
Moreover, $(a,b,c)\in X_1\cap X_2$ is equivalent to $(b,c)$ not being $(2,3)$-admissible and $(a,c)$ not being $(1,3)$-admissible, so
\[ |X_1\cap X_2|=\sum_{a\in\cc}d'_{3,1}(a)d'_{3,2}(a). \]
Using the same argument for $X_2$ and $X_3$, we obtain
\begin{align*}
	d(\mathcal P)
	&\le 1-\frac{1}{2n^2}\sum_{a\in\cc}\sum_{i\neq j}d'_{i,j}(a)
	+\frac{1}{n^3}\sum_{a\in\cc}\bigl(d'_{1,2}(a)d'_{1,3}(a)+d'_{2,1}(a)d'_{2,3}(a)+d'_{3,1}(a)d'_{3,2}(a)\bigr)\\
	&=1+\frac{1}{n^3}\sum_{a\in\cc}\left(d_{1,2}(a)d_{1,3}(a)+d_{2,1}(a)d_{2,3}(a)+d_{3,1}(a)d_{3,2}(a)-\frac n2\sum_{i\neq j}d_{i,j}(a)\right)\\
	&=1+\frac1n\sum_{a\in\cc}\bigl(f_1(a)+f_2(a)+f_3(a)\bigr),
\end{align*}
as required.
\end{proof}

\subsection{The case \texorpdfstring{$k\ge 11$}{k-ge-11}}\label{subsec:k-ge-11}
We first handle the range $k\ge11$. In this range the minimal-counterexample estimate at the end of the section will force minimum degree at least $1/4$. This threshold lets us replace each local error term by a simple positive-part quadratic.

For a real number $z$, let $\pos{z}=\max\{z,0\}$ denote its positive part.

\begin{lemma}\label{lem:positive-part-ineq}
For any $x,y\in[0,1]$ with $xy\ge \frac14$, one has
\[ xy-\frac{x+y}{2}
	\le \frac12\pos{x-\frac12}^{2}+\frac12\pos{y-\frac12}^{2}-\frac14. \]
\end{lemma}

\begin{proof}
Since $xy\ge\frac14$, it is impossible that both $x<\frac12$ and $y<\frac12$. We may therefore assume $x\ge\frac12$. If $y\ge\frac12$, then
\[ xy-\frac{x+y}{2}\le \frac{x^2+y^2}{2}-\frac{x+y}{2}
	=\frac12\left(x-\frac12\right)^2+\frac12\left(y-\frac12\right)^2-\frac14. \]
If $y\le\frac12$, then
	\[
	\frac12\left(x-\frac12\right)^2-\frac14-\left(xy-\frac{x+y}{2}\right)
	=\left(x-\frac12\right)\left(\frac x2-y+\frac14\right)\ge 0.
	\]
	This proves the lemma.
\end{proof}

\begin{lemma}\label{lem:Phi-density-bound}
	Let $\mathcal P=(\cc,\ca)$ be a palette with $\delta(\mathcal P)\ge\frac14$. Define
	\[
	\phi(x,y):=\pos{x-\frac12}^{2}+\frac12\pos{y-x-\frac12}^{2}.	\]
	Then
	\[
	d(\mathcal P)\le \frac14+\frac1n\sum_{a\in\cc}\bigl(\phi(m_A(a),m_B(a))+\phi(m_C(a),m_D(a))\bigr). \]
\end{lemma}

\begin{proof}
	By~\cref{lem:palette-density-ineq}, it is enough to estimate $f_1(a),f_2(a),f_3(a)$ for a fixed color $a\in\cc$. Since $|\ca_a^1|\le d_{1,2}(a)d_{1,3}(a)$, $|\ca_a^2|\le d_{2,1}(a)d_{2,3}(a)$ and $|\ca_a^3|\le d_{3,1}(a)d_{3,2}(a)$, the assumption $\delta(\mathcal P)\ge1/4$ implies that each of the products
	\[
	e_{1,2}(a)e_{1,3}(a),\qquad e_{2,1}(a)e_{2,3}(a),\qquad e_{3,1}(a)e_{3,2}(a)
	\]
	is at least $1/4$. Applying~\cref{lem:positive-part-ineq} to these three pairs gives
	\begin{align*}
		&f_1(a)\le \frac12\pos{e_{1,2}(a)-\frac12}^{2}+\frac12\pos{e_{1,3}(a)-\frac12}^{2}-\frac14,\\
		&f_2(a)\le \frac12\pos{e_{2,1}(a)-\frac12}^{2}+\frac12\pos{e_{2,3}(a)-\frac12}^{2}-\frac14,\\
		&f_3(a)\le \frac12\pos{e_{3,1}(a)-\frac12}^{2}+\frac12\pos{e_{3,2}(a)-\frac12}^{2}-\frac14.
	\end{align*}
	Using
	\[
	e_{2,3}(a),e_{3,2}(a)\le m_A(a),
	\qquad
	e_{1,2}(a),e_{2,1}(a)\le m_C(a),
	\]
	and
	\[
	e_{1,3}(a)=m_B(a)-m_A(a),
	\qquad
	e_{3,1}(a)=m_D(a)-m_C(a),
	\]
	we obtain
	\[
	f_1(a)+f_2(a)+f_3(a)
	\le \phi(m_A(a),m_B(a))+\phi(m_C(a),m_D(a))-\frac34.
	\]
	The result follows from~\cref{lem:palette-density-ineq} after summing over $a\in\cc$.
\end{proof}

\begin{lemma}\label{lem:Phi-majorant}
	Let $k\ge 11$ and define
	\[
	\alpha_k:=\frac{k-3}{2(k-1)^3},
	\qquad
	\beta_k:=\frac{2(k-3)}{(k-1)^3},
	\qquad
	\gamma_k:=\frac{3(k-3)(k-7)}{8(k-1)^2}.
	\]
	Then for all $0\le x\le 1$ and all $y$ with $x\le y\le x+1$,
	\[
	\phi(x,y)\le \frac{\alpha_k}{1-x}+\frac{\beta_k}{2-y}+\gamma_k.
	\]
\end{lemma}

The proof of~\cref{lem:Phi-majorant} is postponed to~\cref{section:majorant-proofs}. Assuming this analytic estimate, the palette bound for $k\ge11$ follows by summing the two reciprocal terms and applying~\cref{cor:degree-sums}.

\begin{theorem}\label{thm:palette-k-ge-11}
	Let $k\ge 11$. If $\mathcal P$ is $S_k$-bad and $\delta(\mathcal P)\ge\frac14$, then
	\[
	d(\mathcal P)\le \frac{k^2-5k+7}{(k-1)^2}.
	\]
\end{theorem}

\begin{proof}
	By~\cref{lem:Phi-density-bound,lem:Phi-majorant},
	\[
	d(\mathcal P) \le \frac14+\frac1n\sum_{a\in\cc}\left(\frac{\alpha_k}{1-m_A(a)}+\frac{\beta_k}{2-m_B(a)}+\gamma_k\right) +\frac1n\sum_{a\in\cc}\left(\frac{\alpha_k}{1-m_C(a)}+\frac{\beta_k}{2-m_D(a)}+\gamma_k\right).
	\]
	Using~\cref{cor:degree-sums}, we obtain
	\[
	d(\mathcal P) \le \frac14+2\alpha_k(k-1)+\beta_k(k-1)+2\gamma_k =\frac14+\frac{3(k-3)^2}{4(k-1)^2} =\frac{k^2-5k+7}{(k-1)^2}.
	\]
\end{proof}

\subsection{The case \texorpdfstring{$k = 9$}{k=9} and \texorpdfstring{$k = 10$}{k=10}}\label{subsec:k-10}
For $k=9,10$ the minimum degree available from the minimal-counterexample argument is below $1/4$, so the quadratic estimate used above is no longer strong enough. We use instead a sharper envelope that keeps track of which coordinate is below $1/2$.
\begin{lemma}\label{lem:k10-envelope}
Let $k\in\{9,10\}$, $\sigma_k:=\frac{k^2-10k+18}{(k-1)^2}$ and $\mathcal P=(\cc,\ca)$ be a palette with $\delta(\mathcal P)\ge \sigma_k$. Define
\[ \psi_k(x,y):=
	\begin{cases}
		xy-\dfrac{x+y}{2}, & y\ge\dfrac12,\\[4pt]
		\sigma_k-\dfrac y2-\dfrac{\sigma_k}{2y}, & y\le\dfrac12,
	\end{cases} \qquad \text{and} \qquad
	\chi_k(x,y):= \max\left\{xy-\frac{x+y}{2}, \sigma_k - \sqrt{\sigma_k}\right\}.
	\]
	Then
	\[d(\cp) \le 1 + \frac1n\sum_{a\in\cc}\bigl(\psi_k(m_A(a), e_{3,1}(a)) + \chi_k(m_A(a), m_C(a)) + \psi_k(m_C(a), e_{1, 3}(a)) \bigr).\]
\end{lemma}

\begin{proof}
	Since $|\ca_a^1|\le d_{1,2}(a)d_{1,3}(a)$, $|\ca_a^2|\le d_{2,1}(a)d_{2,3}(a)$ and $|\ca_a^3|\le d_{3,1}(a)d_{3,2}(a)$, the assumption $\delta(\mathcal P)\ge \sigma_k$ implies that each of the products
	\[
	e_{1,2}(a)e_{1,3}(a),\qquad e_{2,1}(a)e_{2,3}(a),\qquad e_{3,1}(a)e_{3,2}(a)
	\]
	is at least $\sigma_k$.
	For $f_1(a)=e_{1, 2}(a)e_{1, 3}(a)-(e_{1, 2}(a)+e_{1, 3}(a))/2$, the variable $e_{1, 2}(a)$ ranges over $[\frac{\sigma_k}{e_{1,3}(a)}, m_C(a)]$. Since this expression is affine in $e_{1, 2}(a)$ with slope $e_{1, 3}(a)-1/2$, its maximum is attained at $e_{1, 2}(a) = m_C(a)$ when $e_{1, 3}(a) \ge 1/2$, and at $e_{1, 2}(a) = \frac{\sigma_k}{e_{1,3}(a)}$ when $e_{1,3}(a)\le1/2$. Therefore $f_1(a) \le \psi_k(m_C(a), e_{1, 3}(a))$. The same argument gives $f_3(a) \le \psi_k(m_A(a), e_{3,1}(a))$.
	
	It remains to control $f_2(a) = e_{2, 1}(a)e_{2, 3}(a)-(e_{2, 1}(a)+e_{2, 3}(a))/2$ over $0 \le e_{2, 1}(a) \le m_C(a)$, $0\le e_{2, 3}(a) \le m_A(a)$, and $e_{2, 1}(a)e_{2, 3}(a) \ge \sigma_k$. If both $e_{2, 1}(a)$ and $e_{2, 3}(a)$ are at least $1/2$, then this expression is increasing in each variable, so
	\[
	f_2(a)\le m_A(a)m_C(a)-\frac{m_A(a)+m_C(a)}{2}.
	\]
	If instead $e_{2, 1}(a) \le1/2$, then for fixed $e_{2, 1}(a)$ the expression is decreasing in $e_{2, 3}(a)$, hence the maximum occurs at $e_{2, 3}(a) = \frac{\sigma_k}{e_{2,1}(a)}$ and equals
	\[ \sigma_k -\frac {e_{2,1}(a)}{2}- \frac{\sigma_k}{2e_{2,1}(a)}\le \sigma_k-\sqrt{\sigma_k}. \]
	The case $e_{2, 3}(a)\le1/2$ is symmetric. 
	Combining the three estimates proves the lemma.
\end{proof}

\begin{lemma}\label{lem:k10-majorant}
Let $k\in\{9,10\}$, set $\sigma_k:=\frac{k^2-10k+18}{(k-1)^2}$, and define \[
\alpha_k:=\frac{k-3}{2(k-1)^3},
\qquad
\beta_k:=\frac{2(k-3)}{(k-1)^3},
\qquad
\gamma_k:=\frac{3(2k - 5)}{(k-1)^2}.
\]
Then for all real numbers $x,y,s,t$ satisfying
\[ 0 \le x,y,s,t\le 1,
	\qquad
	xs\ge \sigma_k,\quad xy\ge \sigma_k,\quad yt\ge \sigma_k,
	\]
	one has
	\begin{align}\label{eq:k10-majorant}
		\psi_k(x,s)+\chi_k(x,y)+\psi_k(y,t)
		\le \alpha_k\left(\frac{1}{1-x}+\frac{1}{1-y}\right) +\beta_k\left(\frac{1}{2-y-s}+\frac{1}{2-x-t}\right)-\gamma_k.
	\end{align}
\end{lemma}

The proof of~\cref{lem:k10-majorant} is postponed to~\cref{section:majorant-proofs}. We now combine it with the envelope from~\cref{lem:k10-envelope}; this gives the desired palette bound in the two exceptional cases, provided the minimum degree is slightly larger than $\sigma_k$.

\begin{theorem}\label{thm:palette-k-10}
Let $k\in\{9,10\}$,	if $\mathcal P$ is $S_k$-bad and $\delta(\mathcal P)> \sigma_k$, then
	\[
	d(\mathcal P)\le \frac{k^2-5k+7}{(k-1)^2}.
	\]
\end{theorem}

\begin{proof}
	By~\cref{lem:palette-density-ineq,lem:k10-envelope,lem:k10-majorant}, we obtain
	\begin{align*}
		d(\mathcal P)
		&\le 1+\frac1n\sum_{a\in\cc}\bigl(f_1(a)+f_2(a)+f_3(a)\bigr)\\
		&\le 1+\frac1n\sum_{a\in\cc}\bigl(\psi_k(m_A(a), e_{3,1}(a)) + \chi_k(m_A(a), m_C(a)) + \psi_k(m_C(a), e_{1,3}(a))\bigr)\\
		&\le 1+\frac1n\sum_{a\in\cc}\left[
		\alpha_k\left(\frac{1}{1-m_C(a)}+\frac{1}{1-m_A(a)}\right)
		+\beta_k\left(\frac{1}{2-m_B(a)}+\frac{1}{2-m_D(a)}\right)-\gamma_k
		\right].
	\end{align*}
	Using~\cref{cor:degree-sums}, we conclude that
	\[
	d(\mathcal P)\le 1+ 2\alpha_k(k-1)+ \beta_k(k-1) -\gamma_k
	= \frac{k^2-5k+7}{(k-1)^2}.
	\]
\end{proof}

\subsection{Completion of the proof}\label{subsec:completion}
It remains to remove the minimum-degree assumptions in~\cref{thm:palette-k-ge-11,thm:palette-k-10}. The following standard minimal-counterexample estimate shows that any smallest counterexample must automatically have the required local density.

\begin{lemma}\label{lem:minimal-counterexample-degree}
	Let $k \ge 3$. Suppose that $\mathcal P=(\cc,\ca)$ is an $S_k$-bad palette with $d(\mathcal P)> \frac{k^2-5k+7}{(k-1)^2} $ and with $|\cc|$ minimum among all such palettes. Then
	\[ \delta(\mathcal P)\ge 3d(\mathcal P)-\frac{2k-3}{k-1}. \]
	Consequently, $\delta(\mathcal P)>1/4$ for $k\ge11$, and $\delta(\mathcal P)> \sigma_k$ for $k \in \{9, 10\}$.
\end{lemma}

\begin{proof}
	Let $|\cc|=n$ and $D_i:=D_{\mathcal P}[\cc_i]$ be the induced subdigraph of $D_{\mathcal P}$ on $\cc_i$ for $i\in[2]$. Combining~\cref{lem:LW-palette-digraph,lem:turan number of tournaments} gives
	\[|E(D_i)|\le \frac{k-2}{k-1}n^2 \]
	for $i\in[2]$. 
	We first show a simple lower bound for $d_{i,j}(a)$. Fix $a\in \cc$, delete the color $a$, and denote by $\mathcal P'=(\cc \setminus\{a\},\ca')$ the resulting palette. By the minimality of $|\cc|$,
	\[ |\ca'|\le (n-1)^3 d(\cp). \]
	By symmetry it is enough to treat $(i,j)=(1,2)$. Deleting $a$ removes all triples from $\ca_a^1$, all triples from $\ca_a^2\setminus \ca_a^1$, and all triples from $\ca_a^3\setminus (\ca_a^1\cup \ca_a^2)$. These three sets have sizes at most $nd_{1,2}(a)$, $n(n-1)$, and $(n-1)^2$, so
	\[n^3 d(\cp)-|\ca'|\le nd_{1,2}(a)+n(n-1)+(n-1)^2.\]
	Hence, for every ordered pair $(i,j)$ with $i\ne j$ one has
	\[nd_{i,j}(a) \ge (3d(\cp) - 2)n^2 + (3n-1)(1-d(\cp)) .\]
	
	Let $x \in\cc$ and $\ell\in[3]$ be such that $|\ca_x^\ell|=\delta(\mathcal P)n^2$. Let $\tilde{\cp} =(\tilde{\cc}, \tilde{\ca})$ be the palette obtained by removing the color $x$ from $\cc$. By symmetry, assume $\ell = 2$.
	Then
	\[ |\ca\setminus \tilde{\ca}|=|\ca_x^2|+|\ca_x^1\setminus \ca_x^2|+|\ca_x^3\setminus (\ca_x^1\cup \ca_x^2)|. \]
	According to the definition of the auxiliary digraph $D_{\mathcal P}$, we have
	\[|\ca_x^1\setminus \ca_x^2|\le |E(D_1)|-d_{2,3}(x), \qquad |\ca_x^3\setminus (\ca_x^1\cup \ca_x^2)|\le (n-1)^2.\]
	Hence
	\[ |\ca| \le |\tilde{\ca}|+\delta(\mathcal P)n^2 + |E(D_1)| - d_{2,3}(x) +(n-1)^2.\]
	Thus, we have
	\begin{align*}
		\delta(\mathcal P)n^2
		&\ge (n^3-(n-1)^3)d(\mathcal P)- \frac{k-2}{k-1}n^2 - (n - 1)^2 + (3d(\cp) - 2)n + \frac{(3n-1)(1-d(\cp))}{n} \\
		& = \left(3d(\mathcal P)-\frac{2k-3}{k-1}\right)n^2 + \frac{(2n-1)(1-d(\cp))}{n} \ge \left(3d(\mathcal P)-\frac{2k-3}{k-1}\right)n^2.
	\end{align*}
	
	This proves the displayed bound. If $d(\mathcal P)>\frac{k^2-5k+7}{(k-1)^2}$, then
	\[ \delta(\mathcal P)>3\cdot \frac{k^2-5k+7}{(k-1)^2}-\frac{2k-3}{k-1}
	=\frac{k^2-10k+18}{(k-1)^2}. \]
	The last expression is at least $1/4$ for $k\ge11$ and equals $\sigma_k$ for $k\in \{9, 10\}$.
\end{proof}

\begin{proof}[Proof of~\cref{thm:palette}]
	Assume for contradiction that there is an $S_k$-bad palette $\mathcal P$ with $d(\mathcal P)> \frac{k^2-5k+7}{(k-1)^2}$, where $k\ge 9$. Choose such a palette with $|\cc|$ minimum. By~\cref{lem:minimal-counterexample-degree}, if $k\ge11$, then $\delta(\mathcal P)>1/4$, contradicting~\cref{thm:palette-k-ge-11}. If $k\in \{9, 10\}$, then $\delta(\mathcal P)> \sigma_k$, contradicting~\cref{thm:palette-k-10}. Thus no such counterexample exists, and the theorem follows.
\end{proof}

\section{Proofs of the two analytic majorants}\label{section:majorant-proofs}
We now prove the two analytic estimates used in the preceding section.

\begin{proof}[Proof of~\cref{lem:Phi-majorant}]
	If $x=1$ or $y=2$, the right-hand side is interpreted as $+\infty$, so there is nothing to prove. Set
	\[
	u:=1-x\in(0,1],
	\qquad
	t:=y-x\in[0,1].
	\]
	The desired inequality is
	\[
	\Psi_k(u,t):=\frac{\alpha_k}{u}+\frac{\beta_k}{1+u-t}+\gamma_k
	-\pos{\frac12-u}^{2}-\frac12\pos{t-\frac12}^{2}\ge0.
	\]
	We split the square $0<u\le1$, $0\le t\le1$ into four regions.
	
	\smallskip
	\noindent\textbf{Region I: $u\ge\frac12$ and $t\le\frac12$.}
	Here both positive-part terms vanish, so $\Psi_k(u,t)>0$.
	
	\smallskip
	\noindent\textbf{Region II: $u\ge\frac12$ and $t\ge\frac12$.}
	For fixed $t$,
	\[
	\frac{\partial\Psi_k}{\partial u}(u,t)
	=-\frac{\alpha_k}{u^2}-\frac{\beta_k}{(1+u-t)^2}<0,
	\]
	so the minimum occurs at $u=1$. It is enough to check
	\[
	h_k(t):=\alpha_k+\frac{\beta_k}{2-t}+\gamma_k-\frac12\left(t-\frac12\right)^2\ge0
	\qquad\left(\frac12\le t\le1\right).
	\]
	Since $h_k''(t)=2\beta_k/(2-t)^3-1<0$ for every $k\ge11$, $h_k$ is concave. Hence it is enough to evaluate the endpoints:
	\[
	h_k\left(\frac12\right)=\frac{(k-3)(9k^2-72k+107)}{24(k-1)^3}>0, \qquad
	h_k(1)=\frac{k^3-15k^2+55k-61}{4(k-1)^3}>0.
	\]
	Thus Region~II is settled.
	
	\smallskip
	\noindent\textbf{Region III: $u\le\frac12$ and $t\le\frac12$.}
	For fixed $u$,
	\[
	\frac{\partial\Psi_k}{\partial t}(u,t)=\frac{\beta_k}{(1+u-t)^2}>0,
	\]
	so the minimum occurs at $t=0$. Hence it is enough to show
	\[
	\frac{\alpha_k}{u}+\frac{\beta_k}{1+u}+\gamma_k-\left(\frac12-u\right)^2\ge0
	\qquad\left(0<u\le\frac12\right).
	\]
	Set
	\[
	G(k,u):=8(k-1)^3u(1+u)\left(\frac{\alpha_k}{u}+\frac{\beta_k}{1+u}+\gamma_k-\left(\frac12-u\right)^2\right).
	\]
	Then
	\[
	G(k,u)=-8(k-1)^3u^4+(9k^3-51k^2+111k-69)u^2+(k^3-27k^2+107k-121)u+4(k-3).
	\]
	For fixed $u\in(0,1/2]$, one checks from the displayed formula that $\frac{\partial G}{\partial k}>0$ for all $k\ge11$. Hence it remains to show that
	\[
	G(11,u)=-16\bigl(500u^4-435u^2+55u-2\bigr)>0
	\]
	on $(0,1/2]$. Put $g(u):=500u^4-435u^2+55u-2$. We compute
	\[
	g'(u)=2000u^3-870u+55,
	\qquad
	g''(u)=6000u^2-870.
	\]
	Thus $g'$ is strictly decreasing on $(0,\sqrt{29/200})$ and strictly increasing on $(\sqrt{29/200},1/2)$. Since $g'(0)=55>0$ and $g'(1/2)=-130<0$, the only possible maximum of $g$ in $(0,1/2]$ occurs at a critical point $u_0$. At such a point,
	\[
	2000u_0^3=870u_0-55,
	\]
	and hence
	\[
	g(u_0)=-\frac{435}{2}u_0^2+\frac{165}{4}u_0-2
	\le -2+\frac{(165/4)^2}{4\cdot(435/2)}=-\frac{41}{928}<0.
	\]
	Therefore $g(u)<0$ throughout $(0,1/2]$, proving Region~III.
	
	\smallskip
	\noindent\textbf{Region IV: $u\le\frac12$ and $t\ge\frac12$.}
	Set
	\[
	\ell:=k-1\ge10,
	\qquad
	s:=\ell u,
	\qquad
	r:=\ell(1-t).
	\]
	Then $0<s\le\ell/2$ and $0\le r\le\ell/2$, and a direct simplification shows that
	\[
	\ell^2\Psi_k(u,t)=H(\ell,s,r),
	\]
	where
	\[
	H(\ell,s,r):=\frac{\ell-2}{2}\left(\frac{(s-1)^2}{s}+\frac{(s+r-2)^2}{s+r}\right)-(s-1)^2-\frac{(r-1)^2}{2}.
	\]
	For fixed $s,r$, the function $H(\ell,s,r)$ is affine and increasing in $\ell$, because
	\[
	\frac{\partial H}{\partial\ell}(\ell,s,r)=\frac12\left(\frac{(s-1)^2}{s}+\frac{(s+r-2)^2}{s+r}\right)\ge0.
	\]
	Since $\ell\ge\max\{10,2s,2r\}$, it suffices to prove the claim for $\ell_0:=\max\{10,2s,2r\}$. We split into three cases.
	
	\smallskip
	\emph{Case 1: $s,r\le5$.} Define
	\[
	h_s(r):=H(10,s,r)=4\left(\frac{(s-1)^2}{s}+\frac{(s+r-2)^2}{s+r}\right)-(s-1)^2-\frac{(r-1)^2}{2}.
	\]
	A direct computation gives
	\begin{align}\label{eq:region-4-boundary-validation}
	h_s(0)=\frac{-2s^3+20s^2-51s+40}{2s}\ge0, \qquad
	h_s(5)=\frac{-s^4+5s^3+37s^2-45s+20}{s(s+5)}\ge0,
	\end{align}
	for every $0<s\le5$. Moreover,
	\[
	h_s'(r)=\frac{(s+r)^2(5-r)-16}{(s+r)^2}, \qquad h_s''(r) = \frac{32 - (s+r)^3 }{(s+r)^3}.
	\]
	If $s\ge4/\sqrt5$, then $h_s'(0)\ge0$ and $h_s'(5)<0$, so $h_s$ first increases and then decreases, and has no interior minimum. Hence~\eqref{eq:region-4-boundary-validation} proves the claim.
	
	Now suppose $0<s<4/\sqrt5$. Then $h_s'(0)<0$ and $h_s'(3)>0$, so $h_s'$ has exactly two zeros on $[0,5]$. The first one, say $\rho\in(0,3)$, is a local minimum of $h_s$. Thus the minimum of $h_s$ is attained either at $r=5$ or at $r=\rho$. The endpoint has already been handled. At the critical point $\rho$, we have
	\[
	(s+\rho)^2(5-\rho)=16.
	\]
	Let $x=\sqrt{5-\rho}\in(\sqrt2,\sqrt5)$. Then $\rho=5-x^2$, $s+\rho=4/x$, and $s=x^2+4/x-5$. Substituting these into $h_s$ gives
	\[
	h_s(\rho)=-\frac{(x-2)^2P(x)}{2x^2(x-1)(x^2+x-4)},
	\]
	where
	\[
	P(x)=3x^7+12x^6-19x^5-104x^4+36x^3+200x^2-168x+32.
	\]
	A Sturm sequence computation gives $P(x)<0$ for all $x\in(\sqrt2,\sqrt5)$. Hence $h_s(\rho)>0$, and Case~1 is complete.
	
	\smallskip
	\emph{Case 2: $s\ge\max\{r,5\}$.} Let $F_s(r):=H(2s,s,r)$ on $0\le r\le s$. A short calculation gives
	\[
	F_s''(r)=-1+\frac{8(s-1)}{(s+r)^3}<0
	\qquad(s\ge5),
	\]
	so $F_s$ is concave and its minimum is attained at an endpoint. Now
	\[
	F_s(0)=s^2-6s+\frac{19}{2}-\frac5s,
	\qquad
	F_s(s)=\frac{3(s-2)(s-1)^2}{2s}.
	\]
	The second quantity is positive for $s\ge5$. The first is increasing for $s\ge5$, since
	\[
	\frac{dF_s(0)}{ds}=2s-6+\frac5{s^2}>0,
	\]
	so $F_s(0)\ge F_5(0)=7/2>0$. Hence $H(2s,s,r)>0$ in Case~2.
	
	\smallskip
	\emph{Case 3: $r\ge\max\{s,5\}$.} For fixed $s$, a direct computation gives:
	\[
	\frac{\partial H}{\partial r}(2r,s,r)=\frac{M(s,r)}{s(r+s)^2},
	\]
	where
\[ M(s,r):=r^3s+4r^2s^2-6r^2s+r^2+5rs^3-12rs^2+2rs +2s^4-6s^3+5s^2+4s. \]
	Moreover,
	\[
	\frac{\partial M}{\partial r}(s,r)=(r+s)(3rs+5s^2-12s+2)
	\ge(r+s)(5s^2+3s+2)>0
	\]
	whenever $r\ge5$. Thus $M(s,r)$ is increasing in $r$, and so
	\[
	M(s,r)\ge M(s,5)=2s^4+19s^3+45s^2-11s+25>0,
	\]
	because $45s^2-11s+25$ has negative discriminant. Therefore $\frac{\partial H}{\partial r}(2r,s,r)>0$ for $r\ge\max\{s,5\}$.
	Thus, if $s \le 5$, then
	\[
	H(2r,s,r)\ge H(10,s,5),
	\]
	which was already established in Case~1.
	If $s > 5$, then
	\[H(2r,s,r)\ge H(2s, s, s) = \frac{3(s - 1)^2(s-2)}{2s} > 0.\]
	Hence Case~3 follows.
	
	This completes the proof of Region~IV and of the lemma.
\end{proof}

\begin{proof}[Proof of~\cref{lem:k10-majorant}]
If one of the denominators on the right-hand side of~\eqref{eq:k10-majorant} is zero, then the right-hand side is interpreted as $+\infty$, and there is nothing to prove. 
Hence we may assume that all denominators are positive. 
Set
\[ a:=1-x,\qquad b:=1-y,\qquad p:=1-s,\qquad q:=1-t.\]
Then, we have 
\[ 0<a,b\le 1-\sigma_k,\quad 0 \le p, q \le 1-\sigma_k,\quad (1-a)(1-p)\ge \sigma_k, \quad (1-a)(1-b)\ge \sigma_k, \quad (1-b)(1-q)\ge \sigma_k. \]
Moreover,
\[ 2-y-s=b+p, \qquad 2-x-t=a+q. \]
	Define
	\[
	\Psi_k(a,p):=\psi_k(1-a,1-p)
	=
	\begin{cases}
		ap-\dfrac{a+p}{2}, & p\le\dfrac12,\\[4pt]
		\dfrac p2+\sigma_k-\dfrac12-\dfrac{\sigma_k}{2(1-p)}, & p\ge\dfrac12,
	\end{cases}
	\]
	and
	\[ \Phi_k(a,b):=\chi_k(1-a,1-b)
	=
	\max\left\{ab-\frac{a+b}{2},\ \sigma_k-\sqrt{\sigma_k} \right\}.
	\]
	Thus the desired inequality is equivalent to
	\begin{equation}\label{eq:k9k10-transformed}
		\Psi_k(a,p)+\Phi_k(a,b)+\Psi_k(b,q)
		\le
		\frac{\alpha_k}{a}+\frac{\alpha_k}{b}+\frac{\beta_k}{b+p}+\frac{\beta_k}{a+q}-\gamma_k.
	\end{equation}
	For convenience, put
	\[
	E_k(a,b,p):=\frac{\beta_k}{b+p}-\Psi_k(a,p).
	\]
	Then~\eqref{eq:k9k10-transformed} is equivalent to proving
	\[
	R_k:=
	\frac{\alpha_k}{a}+\frac{\alpha_k}{b}-\gamma_k-\Phi_k(a,b)
	+E_k(a,b,p)+E_k(b,a,q)
	\ge0.
	\]
	
Now, we first estimate the bounds for $E_k$. If $p\ge1/2$, then
	\begin{align}
		E_k(a,b,p)
		=\frac{\beta_k}{b+p}-\frac p2+\frac12-\sigma_k+\frac{\sigma_k}{2(1-p)}
		\ge
		\frac{\beta_k}{2(1-\sigma_k)}+ \sqrt{\sigma_k}-\sigma_k.
		\label{eq:k9k10-E-low}
	\end{align}
Here we used $b+p\le2(1-\sigma_k)$. 
If $p\le1/2$, then
	\begin{align}
		E_k(a,b,p)
		=\frac{\beta_k}{b+p}+a\left(\frac12-p\right)+\frac p2 
		\ge
		\frac{\beta_k}{\frac32-\sigma_k}+
		\min\left\{\frac a2,\frac14\right\}.
		\label{eq:k9k10-E-crude}
	\end{align}
Indeed, $b+p\le1-\sigma_k+1/2=3/2-\sigma_k$, and the second term is a convex combination of $a/2$ and $1/4$.

We now split according to which term realizes $\Phi_k(a,b)$.

\smallskip
\noindent\textbf{Case 1: $\Phi_k(a,b)= \sigma_k-\sqrt{\sigma_k}$.}
In this case
\[
R_k=
\frac{\alpha_k}{a}+\frac{\alpha_k}{b}-\gamma_k- \sigma_k + \sqrt{\sigma_k}
+E_k(a,b,p)+E_k(b,a,q).
\]
If $p,q\ge1/2$, then by~\eqref{eq:k9k10-E-low} and $a,b\le1-\sigma_k$ give
\[
R_k\ge
\frac{2\alpha_k + \beta_k}{1-\sigma_k}+ 3 \sqrt{\sigma_k} - 3\sigma_k-\gamma_k > 0,
\]
where the last inequality follows by directly substituting $k =9$ and $k =10$ into the calculation.

If exactly one of $p,q$ is at least $1/2$.  By symmetry, assume $p\ge1/2$ and $q\le1/2$.  Since $(1-a)(1-p)\ge \sigma_k$ and $p\ge1/2$, we have $a\le1-2\sigma_k$, and hence $\alpha_k/a\ge \alpha_k/(1-2\sigma_k)$. In this case, combining~\eqref{eq:k9k10-E-low} and~\eqref{eq:k9k10-E-crude} we have:
\[ R_k \ge \frac{\alpha_k}{1-2\sigma_k} - \gamma_k + 2\sqrt{\sigma_k} - 2\sigma_k + \frac{\beta_k}{2(1-\sigma_k)} + \frac{\beta_k}{\frac32-\sigma_k} +  \frac{\alpha_k}{b} + \min\left\{\frac{b}{2}, \frac14\right\}.
\]
Note that for $k \in \{9, 10\}$ and any $b$, we have 
\begin{align}
\label{eq:xyz}
\frac{\alpha_k}{b} + \min\left\{\frac{b}{2}, \frac14\right\}  \ge \sqrt{2\alpha_k}.
\end{align}
Since, when $b \le 1/2$, 
$\frac{\alpha_k}{b} + \min\left\{\frac{b}{2}, \frac14\right\} = \frac{\alpha_k}{b} + \frac{b}{2}  \ge \sqrt{2\alpha_k}$.
When $b \ge 1/2$, we also have
$\frac{\alpha_k}{b} + \min\left\{\frac{b}{2}, \frac14\right\} \ge \frac14 \ge \sqrt{2\alpha_k}$,
for $k \in \{9, 10\}$. Thus, by directly substituting $k$ into the calculation, we have
\[R_k \ge \frac{\alpha_k}{1-2\sigma_k} - \gamma_k + 2\sqrt{\sigma_k} - 2\sigma_k + \frac{\beta_k}{2(1-\sigma_k)} + \frac{\beta_k}{\frac32-\sigma_k} + \sqrt{2\alpha_k} > 0.\]
It remains in Case~1 to consider $p,q\le1/2$.  By~\eqref{eq:k9k10-E-crude},
\[ R_k\ge
\frac{\alpha_k}{a}+  \frac{\alpha_k}{b} + \min\left\{\frac a2,\frac14\right\} + \min\left\{\frac b2,\frac14\right\} -\gamma_k- \sigma_k + \sqrt{\sigma_k} + \frac{2\beta_k}{\frac32-\sigma_k}
\]
If one of $a,b$ is at least $1/2$, say $a\ge1/2$.  Then by~\eqref{eq:xyz}
\[ \frac{\alpha_k}{a}+  \frac{\alpha_k}{b} + \min\left\{\frac a2,\frac14\right\} + \min\left\{\frac b2,\frac14\right\} \ge \frac{\alpha_k}{1-\sigma_k} +
\frac14+\sqrt{2\alpha_k}.
\]
A direct substitution of $k=9,10$ gives
\[
\frac{\alpha_k}{1-\sigma_k} +  \frac14 +\sqrt{2\alpha_k} -\gamma_k- \sigma_k + \sqrt{\sigma_k} + \frac{2\beta_k}{\frac32-\sigma_k} >0.
\]
Thus, we may assume $a,b<1/2$.  Put $s=a+b$ and $t=ab$. Since $\Phi_k(a,b)= \sigma_k-\sqrt{\sigma_k}$, we have
\[ ab-\frac{a+b}{2} = t - \frac{s}{2}\le \sigma_k-\sqrt{\sigma_k} .\]
Then,
\begin{align*}
& \frac{\alpha_k}{a}+  \frac{\alpha_k}{b} + \min\left\{\frac a2,\frac14\right\} + \min\left\{\frac b2,\frac14\right\} = \frac{s\alpha_k}{t} + \frac{s}{2} \ge \frac{s\alpha_k}{\frac{s}{2} + \sigma_k-\sqrt{\sigma_k}} + \frac{s}{2} \\
= & 2\alpha_k + \frac{2\alpha_k(\sqrt{\sigma_k} - \sigma_k)}{\frac{s}{2} + \sigma_k - \sqrt{\sigma_k}} + \frac{s}{2} \ge 2\alpha_k +  \sqrt{\sigma_k} - \sigma_k + 2\sqrt{2\alpha_k(\sqrt{\sigma_k} - \sigma_k)}
\end{align*}
Again, direct substitution for $k=9,10$ gives
\[
R_k \ge 2\alpha_k +  2\sqrt{\sigma_k} - 2\sigma_k + 2\sqrt{2\alpha_k(\sqrt{\sigma_k} - \sigma_k)} -\gamma_k + \frac{2\beta_k}{\frac32-\sigma_k}>0.\]
This completes Case~1.

\smallskip
\noindent\textbf{Case 2: $\Phi_k(a,b)=ab-\dfrac{a+b}{2}>\sigma_k-\sqrt{\sigma_k}$.} In this case
\[
R_k=
\frac{\alpha_k}{a}+\frac{\alpha_k}{b}-\gamma_k
-ab+\frac{a+b}{2}
+E_k(a,b,p)+E_k(b,a,q).
\]
We first note that this case forces $a,b\le1/2$.  Indeed, if $x=1-a<1/2$, then the function
\[
y\mapsto xy-\frac{x+y}{2}
\]
is decreasing in $y$, and since $y\ge \sigma_k/x$,
\[
xy-\frac{x+y}{2} \le \sigma_k-\frac12\left(x+\frac{\sigma_k}{x}\right) \le \sigma_k-\sqrt{\sigma_k},
\]
contradicting the assumption of this case.  Hence $x\ge1/2$, i.e. $a\le1/2$.  The same argument gives $b\le1/2$.

Besides, we have a more precise estimate of $E_k$. If $p\le1/2$, then
\begin{align}
E_k(a,b,p)
&=\frac{\beta_k}{b+p}+\left(\frac12-a\right)p+\frac a2  =\frac a2+
\frac{\beta_k}{b+p}
	+\left(\frac12-a\right)(b+p)
	-\left(\frac12-a\right)b \notag\\
	&\ge
	\frac a2+2\sqrt{\beta_k\left(\frac12-a\right)}-
	\left(\frac12-a\right)b.
	\label{eq:k9k10-E-high}
\end{align}

If $p,q\le1/2$, then applying~\eqref{eq:k9k10-E-high} twice gives
\[
R_k\ge \frac{\alpha_k}{a} + \frac{a}{2} + 2\sqrt{\beta_k\left(\frac{1}{2} - a\right)} + \frac{\alpha_k}{b} + \frac{b}{2} + 2\sqrt{\beta_k\left(\frac{1}{2} - b\right)} + ab- \gamma_k.
\]
Let 
\[D_k(x) := \frac{\alpha_k}{x} + \frac{x}{2} + 2\sqrt{\beta_k\left(\frac{1}{2} - x\right)} + \frac{x - \frac{1}{k-1}}{k-1} - \frac{3k-8}{(k-1)^2},\]
then the definition of $D_k$ gives the identity
\[
\frac{\alpha_k}{a} + \frac{a}{2} + 2\sqrt{\beta_k\left(\frac{1}{2} - a\right)} + \frac{\alpha_k}{b} + \frac{b}{2} + 2\sqrt{\beta_k\left(\frac{1}{2} - b\right)} + ab- \gamma_k
=
D_k(a)+D_k(b)+(a - \frac{1}{k-1})(b-\frac{1}{k-1}).
\]
Let 
\[
L_k^-(x):=\frac{62-5k}{6}x^2+20(10-k)x^3 \qquad
L_k^+(x):=\frac{5k-41}{48}x^2+4(10-k)x^2\left(\frac12-\frac{1}{k-1}-x\right).
\]
For $k=9,10$, Sturm's theorem verifies the two one-variable inequalities
\begin{align}
D_k(x)&\ge L_k^-(\frac{1}{k-1}-x) \ge 0 &&\left(0\le x \le \frac{1}{k-1} \right), \label{eq:k9k10-D-left-unified}\\
D_k(x)&\ge L_k^+(x - \frac{1}{k-1}) \ge 0
&&\left(\frac{1}{k-1} \le x \le\frac12\right). \label{eq:k9k10-D-right-unified}
\end{align}
If $a$ and $b$ lie on the same side of $\frac{1}{k-1}$, then~\eqref{eq:k9k10-D-left-unified} and~\eqref{eq:k9k10-D-right-unified} imply $D_k(a),D_k(b)\ge0$, while $(a-\frac{1}{k-1})(b-\frac{1}{k-1})\ge0$.  Hence $R_k \ge 0$.  

In the mixed case, by symmetry, assume $a \le \frac{1}{k-1}$ and $b \ge \frac{1}{k-1}$. Let
\[ a = \frac{1}{k-1} - X, \qquad b = Y + \frac{1}{k-1},
\]
where $0\le X\le \frac{1}{k-1}$ and $0\le Y\le1/2- \frac{1}{k-1}$.  It is enough to prove
\[
L_k^-(X)+L_k^+(Y)-XY\ge0.
\]
For $k=10$ this is
\[
2X^2+\frac3{16}Y^2-XY\ge0,
\]
which is positive semidefinite.  For $k=9$, after multiplying by $12$ we need
\[
240X^3+34X^2-12XY-48Y^3+19Y^2\ge0.
\]
If $Y=0$, this is clear.  Otherwise set $\lambda=X/Y$.  Dividing by $Y^2$ gives
\[
34\lambda^2-12\lambda+19+Y(240\lambda^3-48).
\]
If $240\lambda^3-48\ge0$, this is at least $34\lambda^2-12\lambda+19>0$.  If $240\lambda^3-48<0$, then it is minimized at $Y=3/8$, and the resulting polynomial
\[
90\lambda^3+34\lambda^2-12\lambda+1
\]
is positive for all $\lambda\ge0$ by Sturm's theorem.  Thus, in this case we also have $R_k \ge 0$.

Suppose exactly one of $p,q$ is at least $1/2$.  By symmetry, assume $p\ge1/2$ and $q\le1/2$.  Using~\eqref{eq:k9k10-E-low} and~\eqref{eq:k9k10-E-high}, we obtain
\begin{align*}
	R_k
	&\ge
	\frac{\alpha_k}{a}+\frac{\alpha_k}{b}-\gamma_k
	-ab+\frac{a+b}{2}
	+ \frac{\beta_k}{2(1-\sigma_k)}+\sqrt{\sigma_k}-\sigma_k
	+\frac b2
	+2\sqrt{\beta_k\left(\frac12-b\right)}
	-\left(\frac12-b\right)a \\
	&\ge
	2\alpha_k
	+ \frac{\beta_k}{2(1-\sigma_k)}+\sqrt{\sigma_k}-\sigma_k -\gamma_k +
	\frac{\alpha_k}{b}+b+2\sqrt{\beta_k\left(\frac12-b\right)}.
\end{align*}
Note that, by Sturm's theorem, for $k \in \{9, 10\}$ and $b \in (0, 1/2]$
we have 
\[\frac{\alpha_k}{b}+b+2\sqrt{\beta_k\left(\frac12-b\right)} \ge \frac{16-k}{20}.\]
Thus, by directly substituting $k$ into the calculation, we have
\[R_k \ge 2\alpha_k
+ \frac{\beta_k}{2(1-\sigma_k)}+\sqrt{\sigma_k}-\sigma_k -\gamma_k + \frac{16-k}{20} > 0.\]

Finally, suppose $p,q\ge1/2$.  By~\eqref{eq:k9k10-E-low}, we obtain
\[R_k \ge
\frac{\alpha_k}{a}+\frac{\alpha_k}{b}-\gamma_k
-ab+\frac{a+b}{2}
+ \frac{\beta_k}{(1-\sigma_k)}+ 2\sqrt{\sigma_k}-2\sigma_k. \]
Let $z=\sqrt{ab}$, then
\[
\frac{\alpha_k}{a}+\frac{\alpha_k}{b}-ab+\frac{a+b}{2}
\ge
\frac{2\alpha_k}{z}+z-z^2.
\]
Since $0<z\le1/2$ and $k\in\{9,10\}$,
\[
\frac{2\alpha_k}{z}+z-z^2- \frac{2k-5}{(k-1)^2}
=
\frac{((k-1)z-1)^2(k-3-(k-1)z)}{(k-1)^3z}
\ge0.
\]
Again, direct substitution for $k=9,10$ gives
\[R_k \ge \frac{2k-5}{(k-1)^2} + \frac{\beta_k}{(1-\sigma_k)}+ 2\sqrt{\sigma_k}-2\sigma_k -\gamma_k > 0.\]
This completes Case~2 and proves~\eqref{eq:k9k10-transformed}.
\end{proof}

\section{Concluding remarks}\label{section:concluding}
We have proved that the palette construction of Reiher, R\"odl and Schacht is optimal for every $k\ge9$:
\[
\piv(S_k)=\frac{k^2-5k+7}{(k-1)^2}.
\]
The global part of the proof uses only the minimal-counterexample estimate in~\cref{lem:minimal-counterexample-degree}.  If a smallest counterexample existed at the conjectured density, this estimate would force $\delta(\mathcal P) > \sigma_k = \frac{k^2-10k+18}{(k-1)^2}$.
For $k\ge11$ one has $\sigma_k\ge1/4$, and the simple quadratic majorant in~\cref{lem:Phi-majorant} applies.  The values $k=9,10$ are below this $1/4$ threshold, but $\sigma_9=9/64$ and $\sigma_{10}=2/9$ are still large enough for the sharper, piecewise majorant in~\cref{lem:k10-majorant}.

The same mechanism does not continue automatically to the smaller stars.  For $4\le k\le7$, the quantity $\sigma_k$ is non-positive:
\[
\sigma_4=-\frac23,
\qquad
\sigma_5=-\frac7{16},
\qquad
\sigma_6=-\frac6{25},
\qquad
\sigma_7=-\frac1{12}.
\]
Thus the deletion argument used here gives no positive local lower bound on the products\[
e_{i,j}(a)e_{i,\ell}(a).
\]
Without such a local lower bound, the envelope estimates in~\cref{lem:Phi-density-bound,lem:k10-envelope} no longer control the potentially degenerate colors.

The borderline case $k=8$ is more subtle.  Here $\sigma_8=2/49>0$, but this value is too small for the reciprocal majorant used for $k=9,10$.  Indeed, if one formally substitutes $k=8$ into the statement of~\cref{lem:k10-majorant}, then the admissible point
\[
(x,y,s,t)=\left(\frac67,\frac15,1,\frac14\right)
\]
satisfies $xs\ge2/49$, $xy\ge2/49$ and $yt\ge2/49$, but the right-hand side of~\eqref{eq:k10-majorant} minus the left-hand side is
\[
\frac{-1193+490\sqrt2}{3430}<0.
\]
Thus the present analytic majorant cannot simply be extended to $k=8$.  This obstruction is methodological rather than structural: it does not suggest that the conjectured value is false, only that the proof must use additional information about palettes.

We believe that the natural remaining problem is still the following.

\begin{conjecture}\label{conj-1}
	\[
	\piv(S_k)=\frac{k^2-5k+7}{(k-1)^2}\qquad\text{for all }4\le k\le8.
	\]
\end{conjecture}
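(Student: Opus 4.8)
The plan is to run the proof of~\cref{thm:main-1} as far down in $k$ as it will go and then attack the residue with new input. By~\cref{thm:ander-dot} and the lower bound in~\eqref{eq:stars} it suffices to bound $d(\cp)$ for every $S_k$-bad palette $\cp$; passing to a counterexample with the fewest colors, \cref{claim} gives $\delta(\cp)\ge 3d(\cp)-\frac{2k-3}{k-1}$, so a putative counterexample with $d(\cp)>\frac{k^2-5k+7}{(k-1)^2}$ satisfies $\delta(\cp)>\frac{k^2-10k+18}{(k-1)^2}$. I would then feed this into a strengthened form of~\cref{lem:palette} to reach a contradiction.

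The obstruction is immediate: $\frac{k^2-10k+18}{(k-1)^2}$ drops below $\tfrac14$ exactly when $k\le 10$, and it is non-positive for $k\le 7$, so~\cref{lem:palette} as stated is useless in this range. Two refinements suggest themselves, each of which, if it can be pushed through, should recover $k=9$ and $k=10$ but not lower. The first is to extend~\cref{lem:palette} to $\delta(\cp)<\tfrac14$: its proof uses the hypothesis $\delta(\cp)\ge\tfrac14$ only to put $\tfrac{e_{1,2}(a)+e_{1,3}(a)}{2}\ge\sqrt{e_{1,2}(a)e_{1,3}(a)}\ge\sqrt{\delta(\cp)}$ on the increasing branch of $u\mapsto(u-\tfrac12)^2-\tfrac14$ and, via $k\ge5$, to guarantee $m_D(a^i)\ge\tfrac{2}{k-1}$ so that~\cref{lem:Tk-free-digraph} applies to every vertex. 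For $\delta(\cp)<\tfrac14$ one would split colors according to whether $\tfrac{e_{1,2}(a)+e_{1,3}(a)}{2}\ge\tfrac12$ (``heavy'', where $f_1(a)\le(m_D(a^1)-\tfrac12)^2-\tfrac14$ and $m_D(a^1)\ge\tfrac12$ still hold) or $<\tfrac12$ (``light'', where $\tfrac{e_{1,2}(a)+e_{1,3}(a)}{2}\in[\sqrt{\delta(\cp)},\tfrac12)$ forces the strictly negative $f_1(a)\le\delta(\cp)-\sqrt{\delta(\cp)}$), argue symmetrically for $f_3$ through $a^2$, and re-run the two cases for $f_2$. The second is to sharpen~\cref{claim}, whose only lossy step is the trivial bound $(n-1)^2$ on $|\ca^{2}_a\setminus(\ca^{\ell}_a\cup\ca^{x}_a)|$, i.e.\ on the size of the ``middle link'' $\{(c_1,c_3):(c_1,a,c_3)\in\ca\}$; a uniform bound of $\bigl(\tfrac{k-2}{k-1}+o(1)\bigr)n^2$ there would improve~\cref{claim} to $\delta(\cp)\ge 3d(\cp)-\frac{2(k-2)}{k-1}$, whence $\delta(\cp)>\frac{k^2-9k+17}{(k-1)^2}>\tfrac14$ for $k\ge 9$ and $k=9,10$ follow from~\cref{lem:palette} unchanged.

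The hard part will be executing either refinement, and then, genuinely, the cases $4\le k\le 8$. For the first refinement, the chain of inequalities in~\cref{lem:palette} is already tight on the extremal palette, so the extra light-color terms have to be absorbed into slack that is not obviously present. For the second, the middle link is \emph{not} $T_k$-free in general — no linear order on $V(S_k)$ places $u$ strictly between every pair of leaves, so the embedding trick that yields $\tfrac{k-2}{k-1}n^2$-type bounds for $\ca^1_a$ and $\ca^3_a$ fails — so a bound on it, if one exists, must exploit $D_1$, $D_2$ and the cross-arcs of $D_{\cp}$ jointly rather than the three admissibility directions separately. And for $4\le k\le 8$, above all $k=4$ where even the classical upper bound $\bigl(\tfrac{k-2}{k-1}\bigr)^2=\tfrac49$ is far from the conjectured $\tfrac13$, \cref{claim} gives no usable lower bound on $\delta(\cp)$, the parabola estimates in~\cref{lem:palette} are too lossy, the Brown--Harary bound in~\cref{lem:turan number of tournaments} is not tight for such small $k$, and the $T_k$-freeness of $D_{\cp}$ from~\cref{lem:LW-palette-digraph} — essentially the only structural constraint currently exploited — does not by itself determine the extremal density. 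I would expect that settling $4\le k\le 8$ requires either a substantially finer structural analysis of $S_k$-bad palettes or a computer-assisted verification in the spirit of the computations of Lamaison and Wu that already reached $k\ge 40$, which is precisely why the statement is posed as a conjecture.
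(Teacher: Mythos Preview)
The statement is \cref{conj-1}, which the paper leaves open: there is no proof in the paper to compare against. Your proposal does not prove it either, and you are explicit about this in your final paragraph. What you have written is not a proof attempt but an accurate diagnosis of why the machinery of \cref{thm:main-1} stalls at $k=11$ and what would be needed to push further.

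Your analysis is sound and in fact largely anticipates the paper's own concluding remarks. The paper also singles out \cref{claim} as the place to sharpen and asserts (without proof) that a refined analysis of $D_{\cp}$ should improve it enough to reach $k\ge 10$; your second refinement is the concrete version of this, and your computation that replacing the trivial $(n-1)^2$ bound on $|\ca_a^2|$ by $\bigl(\tfrac{k-2}{k-1}+o(1)\bigr)n^2$ would yield $\delta(\cp)\ge 3d(\cp)-\tfrac{2(k-2)}{k-1}$ and hence $k\ge 9$ is correct. Your caveat that the middle link is not itself $T_k$-free, so the Brown--Harary bound does not apply directly, is also correct and is presumably why the paper only \emph{believes} rather than proves this strengthening.

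For $4\le k\le 8$ you correctly identify that neither refinement suffices: \cref{claim} gives no usable lower bound on $\delta(\cp)$, the inequalities in the proof of \cref{lem:palette} are tight on the conjectured extremal palette, and the constraint $m(v)\ge\tfrac{2}{k-1}$ in \cref{lem:Tk-free-digraph} becomes restrictive. The paper offers no further ideas here, and neither do you. So the honest summary is: there is no gap because there is no claimed proof; your write-up is a research plan whose assessment of the difficulty matches, and in places slightly sharpens, the paper's own.
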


One possible way forward is to strengthen the passage from density to local degree.  The proof of~\cref{lem:minimal-counterexample-degree} uses only a coarse arc-count bound for the induced auxiliary digraphs.  For small $k$, one may need to exploit the extremal structure of $T_k$-free digraphs, or the interaction between the six admissibility relations, rather than estimating them separately.  Another possibility is to replace the one-coordinate envelopes used above by a genuinely multivariate majorant that keeps track of several admissibility degrees at once.  Such a refinement would be especially relevant for $k=8$, where the minimum-degree information is positive but the one-dimensional envelope already fails.


\sloppy\printbibliography

\appendix
\section{Exact verification of the auxiliary inequalities}\label{app:sturm-verification}

This appendix records the exact checks used in the proofs of~\cref{lem:Phi-majorant,lem:k10-majorant}.  All sign decisions below are made over rational or real algebraic fields; decimal values are displayed only to indicate the size of the positive margins.

For $k\in\{9,10\}$ let
\[
\sigma_k=\frac{k^2-10k+18}{(k-1)^2},\qquad
\alpha_k=\frac{k-3}{2(k-1)^3},\qquad
\beta_k=\frac{2(k-3)}{(k-1)^3},\qquad
\gamma_k=\frac{3(2k-5)}{(k-1)^2}.
\]
The direct substitutions in the proof of~\cref{lem:k10-majorant} reduce to the positivity of the following six algebraic quantities:
\begin{align*}
	M_{1,k}&:=\frac{2\alpha_k+\beta_k}{1-\sigma_k}+3\sqrt{\sigma_k}-3\sigma_k-\gamma_k,\\
	M_{2,k}&:=\frac{\alpha_k}{1-2\sigma_k}-\gamma_k+2\sqrt{\sigma_k}-2\sigma_k
	+\frac{\beta_k}{2(1-\sigma_k)}+\frac{\beta_k}{\frac32-\sigma_k}+\sqrt{2\alpha_k},\\
	M_{3,k}&:=\frac{\alpha_k}{1-\sigma_k}+\frac14+\sqrt{2\alpha_k}-\gamma_k-\sigma_k+\sqrt{\sigma_k}
	+\frac{2\beta_k}{\frac32-\sigma_k},\\
	M_{4,k}&:=2\alpha_k+2\sqrt{\sigma_k}-2\sigma_k+2\sqrt{2\alpha_k(\sqrt{\sigma_k}-\sigma_k)}
	-\gamma_k+\frac{2\beta_k}{\frac32-\sigma_k},\\
	M_{5,k}&:=2\alpha_k+\frac{\beta_k}{2(1-\sigma_k)}+\sqrt{\sigma_k}-\sigma_k
	-\gamma_k+\frac{16-k}{20},\\
	M_{6,k}&:=\frac{2k-5}{(k-1)^2}+\frac{\beta_k}{1-\sigma_k}+2\sqrt{\sigma_k}-2\sigma_k-\gamma_k.
\end{align*}
Exact evaluation in the real algebraic field gives the following decimal lower bounds:
\[
\begin{array}{c|cc}
	& k=9 & k=10\\ \hline
	M_{1,k} & 0.1346590909\ldots & 0.2290283771\ldots\\
	M_{2,k} & 0.0066580923\ldots & 0.0768170074\ldots\\
	M_{3,k} & 0.0245541159\ldots & 0.0778494163\ldots\\
	M_{4,k} & 0.0103921950\ldots & 0.0803006845\ldots\\
	M_{5,k} & 0.0003551136\ldots & 0.0155746168\ldots\\
	M_{6,k} & 0.0897727272\ldots & 0.1526855847\ldots
\end{array}
\]
In particular all six quantities are positive for both $k=9$ and $k=10$.

\smallskip

It remains to certify a few one-variable inequalities.  Besides the polynomial inequalities
\begin{align*}
&3x^7+12x^6-19x^5-104x^4+36x^3+200x^2-168x+32 <0  \quad(\sqrt2<x<\sqrt5), \\
&90\lambda^3+34\lambda^2-12\lambda+1>0  \quad(\lambda\ge0),
\end{align*}
the only checks involving radicals are the two inequalities~\eqref{eq:k9k10-D-left-unified}--\eqref{eq:k9k10-D-right-unified} and
\[
\frac{\alpha_k}{b}+b+2\sqrt{\beta_k\left(\frac12-b\right)}\ge\frac{16-k}{20}
\qquad(0<b\le1/2).
\]
For an expression $A(x)+B(x)\sqrt{C(x)}$, all possible zeros lie among the zeros of $A(x)^2-B(x)^2C(x)$, together with the zeros of $C(x)$.  Thus Sturm root isolation, followed by an exact sign check on each resulting interval and at the endpoints, certifies the inequalities.  The following SageMath script implements precisely this procedure.

\begin{nolinenumbers}
\begin{SageCode}
	R.<X> = PolynomialRing(QQ)
	AA = AlgebraicRealField()
	h = QQ(1)/2
	
	def roots_in(p, a, b):
	    a = AA(a)
	    b = AA(b)
	    roots = p.roots(AA, multiplicities=False)
	    return sorted([
	        AA(r) for r in roots
	        if a < AA(r) < b
	    ])
	
	def uniq(v):
	    v = sorted(v)
	    out = []
	    for z in v:
	        if not out or z != out[-1]:
	            out.append(z)
	    return out
	
	def val(A, B, C, t):
	    return AA(A(t)) + AA(B(t))*AA(C(t)).sqrt()
	
	def verify_sqrt(A, B, C, a, b):
	    cuts = uniq(
	        [AA(a), AA(b)]
	        + roots_in(A^2 - B^2*C, a, b)
	        + roots_in(C, a, b)
	    )
	    for t in cuts:
	        assert val(A, B, C, t) >= 0
	    for u, v in zip(cuts, cuts[1:]):
	        assert val(A, B, C, (u + v)/2) > 0
	
	def const(k):
	    k = QQ(k)
	    return (
	        (k^2 - 10*k + 18)/(k - 1)^2,
	        (k - 3)/(2*(k - 1)^3),
	        2*(k - 3)/(k - 1)^3,
	        3*(2*k - 5)/(k - 1)^2,
	    )
	
	def Lm(k, z):
	    k = QQ(k)
	    return (62 - 5*k)/6*z^2 + 20*(10 - k)*z^3
	
	def Lp(k, z):
	    k = QQ(k)
	    r = QQ(1)/(k - 1)
	    return (5*k - 41)/48*z^2 + 4*(10 - k)*z^2*(h - r - z)
	
	def check_D(k, side):
	    k = QQ(k)
	    sig, alp, bet, gam = const(k)
	    r = QQ(1)/(k - 1)
	    reg = X/2 + (X - r)/(k - 1) - (3*k - 8)/(k - 1)^2
	    if side == 'left':
	        L, a, b = Lm(k, r - X), QQ(0), r
	    else:
	        L, a, b = Lp(k, X - r), r, h
	    verify_sqrt(alp + X*(reg - L), 2*X, bet*(h - X), a, b)
	
	P = 3*X^7 + 12*X^6 - 19*X^5 - 104*X^4 \
	    + 36*X^3 + 200*X^2 - 168*X + 32
	assert not roots_in(P, AA(2).sqrt(), AA(5).sqrt())
	assert P(AA(2).sqrt()) < 0
	assert P(AA(5).sqrt()) < 0
	
	Q = 90*X^3 + 34*X^2 - 12*X + 1
	assert Q(0) > 0
	assert all(AA(r) < 0 for r in Q.roots(AA, multiplicities=False))
	
	for k in [9, 10]:
	    sig, alp, bet, gam = const(k)
	    check_D(k, 'left')
	    check_D(k, 'right')
	    verify_sqrt(
	        alp + X*(X - QQ(16 - k)/20),
	        2*X,
	        bet*(h - X),
	        QQ(0),
	        h,
	    )
	
	print('All checks passed.')
\end{SageCode}
\end{nolinenumbers}

Finally, the lower bounds $L_k^-$ and $L_k^+$ used in the mixed case are nonnegative on their ranges because
\[
L_9^-(X)=\frac{17}{6}X^2+20X^3,
\qquad
L_{10}^-(X)=2X^2,
\]
and
\[
L_9^+(Y)=Y^2\left(\frac{19}{12}-4Y\right)\ge0\quad(0\le Y\le3/8),
\qquad
L_{10}^+(Y)=\frac{3}{16}Y^2\ge0.
\]

\end{document}